\theoremstyle{plain}
\newtheorem{theorem}{Theorem}[section]
\newtheorem{lemma}[theorem]{Lemma}
\newtheorem{proposition}[theorem]{Proposition}
\newtheorem{remark}[theorem]{Remark}
\newtheorem{definition}[theorem]{Definition}
\theoremstyle{definition}
\theoremstyle{remark}
\numberwithin{equation}{section}
\newcommand{\cs}{{\mathcal C}}
\newcommand{\caL}{{\mathcal L}}
\newcommand{\ms}{{\mathcal M}}
\newcommand{\U}{{\mathcal U}}
\newcommand{\R}{{\mathbb R}}
\newcommand{\C}{{\mathbb C}}
\newcommand{\M}{{\mathbb M}^{2\times 2}}
\newcommand{\Om}{\Omega}
\newcommand{\no}{\noindent}
\newcommand{\diag}{{\rm diag}}
\newcommand{\Adj}{{\rm Adj}}
\newcommand{\dsp}{\displaystyle}
\newcommand{\Md}{\mathbb M^{2\times 2}}
\newcommand{\Msim}{\mathbb M_{sym}^{2\times 2}}
\newcommand{\Msdet}{ SL_{sym}(2)}
\renewcommand{\div}{\rm div}
\newcommand{\tr}{\text{tr}}
\renewcommand{\no}{\noindent}
\newcommand{\msl}{\ms(\lambda,\Om)}
\newcommand{\mslp}{\ms(\lambda^\prime,\Om)}
\newcommand{\res}{\mathop{\hbox{\vrule height 7pt width .5pt depth 0pt
\vrule height .5pt width 6pt depth 0pt}}\nolimits}
\title
[Gradient integrability and rigidity results for two-phase conductivities]
{Gradient integrability and rigidity results for two-phase conductivities  in dimension two}
\author[V. Nesi]
{Vincenzo Nesi}
\address[V.  Nesi]{Dipartimento di Matematica ``G. Castelnuovo", Sapienza, Universit\'a di Roma,
Piazzale A. Moro 2, 00185 Roma, Italy}
\email[V. Nesi]{nesi@mat.uniroma1.it}
\author[M. Palombaro]
{Mariapia Palombaro}
\address[M. Palombaro]{Centro De Giorgi, Scuola Normale Superiore, Piazza dei Cavalieri 3, 56126 Pisa, Italy} 
\email[M. Palombaro]{mariapia.palombaro@sns.it}
\author[M. Ponsiglione]
{Marcello Ponsiglione}
\address[M. Ponsiglione]{Dipartimento di Matematica ``G. Castelnuovo", Sapienza, Universit\'a di Roma,
Piazzale A. Moro 2, 00185 Roma, Italy} 
\email[M. Ponsiglione]{ponsigli@mat.uniroma1.it}
\begin{document}
\vskip .2truecm
\begin{abstract}
\small{
This paper deals with higher gradient integrability for $\sigma$-harmonic functions $u$ with discontinuous coefficients $\sigma$, i.e. weak solutions of $\div(\sigma \nabla u) = 0$.
 We focus on  two-phase conductivities $\sigma:\Om\subset\R^2\mapsto \{\sigma_1,\sigma_2\}\subset \Md$, and  study the higher integrability of the corresponding gradient field $|\nabla u|$.
The gradient field  and its integrability clearly depend on the geometry, i.e., on the phases arrangement described by the sets $E_i=\sigma^{-1}(\sigma_i)$. We find the optimal integrability exponent of the  gradient field corresponding to any  pair $\{\sigma_1,\sigma_2\}$ of positive definite matrices, i.e., the worst among all  possible microgeometries. We also  show that it is attained by  so-called exact solutions of the corresponding PDE.
Furthermore, among all two-phase conductivities with fixed ellipticity, we characterize those that correspond to the worse integrability.
\vskip.1truecm
\noindent Keywords: Beltrami system, quasiconformal mappings, elliptic equations, composites, gradient integrability. 

\no
\noindent 2000 Mathematics Subject Classification: 30C62, 35B27.}
\end{abstract}

\maketitle
{\small \tableofcontents}

\section{Introduction}
%\subsection{Beltrami systems and associated $\sigma$-harmonic functions}
Let $\Om$ be a bounded, open and simply connected subset of $\R^2$ with Lipschitz continuous boundary. We are interested in elliptic equations  in  divergence form  with $L^{\infty}$ coefficients, specifically,
\begin{equation}\label{elliptic}
\div(\sigma \nabla u) = 0 \quad \text{ in } \Om.
\end{equation}
Here  $\sigma$ is a matrix valued coefficient, referred to  as   {\it conductivity}, and
any weak solution $u\in H^1_{loc}(\Om)$ to the  equation is called a {\it $\sigma$-harmonic} function. 
The case of discontinuous conductivities $\sigma$ is particularly relevant in the context of non homogeneous and composite materials. 
%where the material properties could vary discontinuously. 
With this motivation, we only assume ellipticity. 
Denote by $\Md$ the space of real $2\times 2$ matrices and by
$\Msim$ the subspace of symmetric matrices.
\begin{definition}\label{ell-TM}
Let $\lambda\in (0,1]$.  We say that $\sigma\in L^\infty (\Om;\Md)$ belongs to  the class $\ms(\lambda,\Om)$ if it satisfies the following
uniform bounds
\begin{align}\label{ellipticity-sigma1}
\sigma \xi \cdot \xi  & \geq  \lambda |\xi|^2   & \text{ for every } \xi\in\R^2 \text{ and for a.e. }x\in\Om    \,,\\
\label{ellipticity-sigma2}
\sigma^{-1} \xi \cdot \xi   & \geq \lambda |\xi|^2  &\text{ for every } \xi\in\R^2 \text{ and for a.e. }x\in\Om \,,
\end{align}
Moreover, we denote by $\ms_{sym}\left(\lambda,\Om\right)$ the set 
$ \Msim \cap \ms\left(\lambda,\Om\right)$.
\end{definition}
\no
The reader may wonder why to use the notion of ellipticity given in Definition \ref{ell-TM}. 
For an explanation related to its relationship with $H$-convergence we refer the reader to 
\cite{AN}.

It is well known that the gradient of $\sigma$-harmonic functions locally belongs to some $L^p$ with $p>2$. The main goal of this paper is to explore this issue, focusing on two-phase conductivities   $\sigma:\Om\mapsto  \{\sigma_1, \sigma_2\}\subset \ms$. We will review known results and prove some new one.

Any $\sigma$-harmonic function $u$ can be seen as the real part of a complex map  
$\dsp f:\Om\mapsto\C$ which is a $H^1_{loc}$ solution to the {\it Beltrami equation}
\begin{equation}\label{beltrami}
 f_{\bar{z}}=\mu \, f_{z}+ \nu \, \overline{ f_{z}},\quad \text{ in }\Om\,,
 \end{equation}
 where the so called complex dilatations $\mu$ and $\nu$, both belonging to $L^{\infty}(\Om,\mathbb C)$, are given by 
 
 \begin{equation}\label{mu-nu(sigma)}
 \mu=\frac{\sigma_{22}-\sigma_{11}-i(\sigma_{12}+\sigma_{21})}{1+\tr \, \sigma+\det\sigma}\,,
 \quad
 \nu=\frac{1-\det\sigma+i(\sigma_{12}-\sigma_{21})}{1+\tr \, \sigma+\det\sigma}\,,
\end{equation}

\no 
and satisfy the ellipticity condition 
\begin{equation}\label{ellipticity-munu}
\| |\mu|+|\nu| \|_{L^\infty}< 1 \,.
\end{equation}
Let us recall that weak solutions to \eqref{beltrami} are called quasiregular mappings. They are called quasiconformal if, in addition, they are injective.
The ellipticity \eqref{ellipticity-munu} can be expressed by
\begin{equation}\label{ellipticity-munu-K}
\| |\mu|+|\nu| \|_{L^\infty}\leq \frac{K-1}{K+1} \,,
\end{equation}
for some $K>1$. The corresponding solutions to \eqref{beltrami} are called $K$-quasiregular, and $K$-quasiconformal if, in addition, they are injective.
In 1994,  K. Astala \cite{A} proved one of the 
most important pending conjectures in the field, namely that planar $K$-
quasiregular mappings have Jacobian determinant in 
$L^{K/(K-1)}_{weak}$. 
Astala's work represented a benchmark for the issue of determining the optimal 
integrability exponent which was previously studied in  the work of 
Bojarski \cite{boj} and N. Meyers \cite{M}. 

Summarizing, to  any given $\sigma \in \ms\left(\lambda,\Om\right)$ one can associate a 
corresponding pair of complex dilations via \eqref{mu-nu(sigma)} and 
therefore, via the Beltrami equation \eqref{beltrami}  a quasiregular mapping.
Therefore, given $\lambda\in [0,1)$ and given 
$\sigma \in \ms\left(\lambda,\Om\right)$ one can find $K=K(\sigma)$ by using 
\eqref{mu-nu(sigma)} and \eqref{ellipticity-munu-K} in such a way that the 
$\sigma$-harmonic function  $u$, solution to \eqref{elliptic} is the real part of a $K$-quasiregular mapping.  
The Astala regularity results in this context reads as $|\nabla u |\in L^{p_K}_{weak}(\Om)$, where 
$p_K:={\frac{2 K}{K-1}}$.  

A more refined issue is to determine weighted estimates for the Jacobian determinant of a quasiconformal mapping.  A first result in this direction was given in  \cite{AN1}.  A much finer recent result, is given in \cite{AIPS},  see formula (1.6). Throughout the  present paper we focus on the simpler framework of $L^p$ spaces. 

The first question is to
determine the best possible (i.e. the minimal) constant $K(\sigma)$ such that if $u$ is $\sigma$-harmonic with $\sigma \in \ms\left(\lambda,\Om\right)$, then $u$ is the real part of a $K(\sigma)$-quasiregular mapping.  
Astala writes in his celebrated paper that his result implies sharp exponents of integrability for the gradient of solutions of planar elliptic pdes of the form \eqref{elliptic}, and he says: ``note that the dilation of $f$ and so necessarily the optimal integrability exponent depends in a complicated manner on all the entries of the matrix $\sigma$ rather than just on its ellipticity''.
Alessandrini and Nesi \cite{AN}, in the process of proving the $G$-stability of Beltrami equations, made a progress which can be found in  their Proposition 1.8. Let us rephrase it here. 
See also \cite{AM} for the estimate \eqref{bestboundsym}.
\begin{proposition}\label{ALN-bis}
Let $\lambda\in(0,1]$. Then 
\begin{equation}\label{bestbound1}
K_\lambda := \sup_{\sigma \in \ms\left(\lambda,\Om\right)} K(\sigma)=\frac{1+\sqrt{1-\lambda^2}}{\lambda}\,,
\end{equation}
\begin{equation}\label{bestboundsym}
K_\lambda^{sym} :=
\sup_{\sigma \in \ms_{sym}\left(\lambda,\Om\right)} K(\sigma)=\frac{ 1}{\lambda}\,.
\end{equation}
\end{proposition}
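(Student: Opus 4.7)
The proof is really a pointwise algebraic optimization over matrices. By \eqref{ellipticity-munu-K}, $K(\sigma)$ is the smallest $K\geq 1$ for which $\||\mu|+|\nu|\|_\infty\leq(K-1)/(K+1)$, and $t\mapsto(1+t)/(1-t)$ is a strictly increasing bijection $[0,1)\to[1,\infty)$. So for both claims it suffices to compute
\[
k_\lambda:=\sup\bigl\{\,|\mu(\sigma)|+|\nu(\sigma)|:\sigma\in\Md\text{ (resp. }\Msim)\text{ satisfies }\eqref{ellipticity-sigma1}\text{ and }\eqref{ellipticity-sigma2}\,\bigr\},
\]
and then read off $K_\lambda=(1+k_\lambda)/(1-k_\lambda)$.

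As a first reduction, I would exploit rotation invariance. A direct check from \eqref{mu-nu(sigma)} shows that under $\sigma\mapsto R\sigma R^T$ with $R\in SO(2)$ one has $\tilde\mu=e^{2i\theta}\mu$ and $\tilde\nu=\nu$ (the antisymmetric part and $\det\sigma$ are rotation-invariant), so both $|\mu|+|\nu|$ and the class $\ms(\lambda,\Om)$ are preserved. Choosing $R$ to diagonalize the symmetric part $\sigma_s$, one may assume
\[
\sigma=\begin{pmatrix}\alpha & c\\-c & \beta\end{pmatrix},\qquad \lambda\leq\alpha\leq\beta,\qquad c\in\R,
\]
in which case \eqref{mu-nu(sigma)} gives
\[
|\mu|=\frac{\beta-\alpha}{D},\quad |\nu|=\frac{\sqrt{(1-\alpha\beta-c^2)^2+4c^2}}{D},\quad D=(1+\alpha)(1+\beta)+c^2.
\]
Moreover, \eqref{ellipticity-sigma1} becomes $\alpha\geq\lambda$; and since $(\sigma^{-1})_s=\diag(\beta,\alpha)/(\alpha\beta+c^2)$, condition \eqref{ellipticity-sigma2} becomes $\alpha\beta+c^2\leq\alpha/\lambda$.

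For the symmetric case $c=0$, the numerator factors after a case split on the sign of $1-\alpha\beta$: it equals $(1-\alpha)(1+\beta)$ when $\alpha\beta\leq1$ and $(1+\alpha)(\beta-1)$ when $\alpha\beta\geq1$, yielding $|\mu|+|\nu|=(1-\alpha)/(1+\alpha)$ or $(\beta-1)/(\beta+1)$ respectively. Both expressions are maximized at the extremes of $[\lambda,1/\lambda]$, giving $k_\lambda^{sym}=(1-\lambda)/(1+\lambda)$ and hence \eqref{bestboundsym}.

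For the general case, I would exhibit the extremizer $\alpha=\beta=\lambda$, $c^2=1-\lambda^2$ (which simultaneously saturates both ellipticity constraints): there $|\mu|=0$, $D=2(1+\lambda)$, and $|\nu|=\sqrt{(1-\lambda)/(1+\lambda)}$, a value sent by $k\mapsto(1+k)/(1-k)$ precisely to $(1+\sqrt{1-\lambda^2})/\lambda$. The main obstacle is the matching upper bound. I would tackle it by first arguing that one may assume the second ellipticity constraint to be saturated, i.e.\ $c^2=\alpha/\lambda-\alpha\beta$: for fixed $(\alpha,\beta)$, a direct differentiation of $|\mu|+|\nu|$ in $t=c^2$ using the explicit formulas shows that the supremum over $t$ is attained at the right endpoint (consistent with the observation that in the symmetric subclass $c=0$ the quantity is strictly smaller, $(1-\lambda)/(1+\lambda)<\sqrt{(1-\lambda)/(1+\lambda)}$). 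This reduces the problem to a two-variable optimization in $(\alpha,\beta)\in\{\lambda\leq\alpha\leq\beta\leq1/\lambda\}$, which one handles either by Lagrange multipliers or by piecewise monotonicity analysis on the boundary components $\alpha=\beta$, $\alpha=\lambda$, $\beta=1/\lambda$, concluding that the global maximum is $\sqrt{(1-\lambda)/(1+\lambda)}$, attained at $\alpha=\beta=\lambda$. This yields \eqref{bestbound1}.
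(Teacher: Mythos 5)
Your overall strategy is sound and is, up to a change of variables, the same computation the paper performs: since $K=\frac{1+k}{1-k}$ with $k=\||\mu|+|\nu|\|_{L^\infty}$, maximizing $|\mu|+|\nu|$ pointwise is equivalent to maximizing the product $gh$ of the top eigenvalues of $G$ and $H$ from \eqref{formula-K}, which is exactly how the paper proves the bounds (Propositions \ref{ALN} and \ref{optimality}). Your rotation reduction, the formulas for $|\mu|$, $|\nu|$ and for the constraints ($\alpha\ge\lambda$ and $\alpha\beta+c^2\le\alpha/\lambda$), the complete treatment of the symmetric case \eqref{bestboundsym}, and the extremizer $\alpha=\beta=\lambda$, $c^2=1-\lambda^2$ (which indeed gives $k=\sqrt{(1-\lambda)/(1+\lambda)}$, i.e. $K=K_\lambda$, and is precisely the matrix \eqref{matriceS}) are all correct. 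Even the asserted monotonicity in $t=c^2$ is true: using $(1-\alpha\beta-t)^2+4t=(1+\alpha\beta+t)^2-4\alpha\beta$ one can check the derivative is positive, and this step is the analogue of the paper's use of the inequality $\det\sigma\le\lambda_1/\lambda$.

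The gap is at the decisive step, the upper bound in \eqref{bestbound1}. After saturating $c^2=\alpha/\lambda-\alpha\beta$ you still must show $\frac{(\beta-\alpha)+\sqrt{(1+\alpha/\lambda)^2-4\alpha\beta}}{1+\alpha/\lambda+\alpha+\beta}\le\sqrt{\frac{1-\lambda}{1+\lambda}}$ on $\{\lambda\le\alpha\le\beta\le 1/\lambda\}$, and this is only asserted (``Lagrange multipliers or piecewise monotonicity analysis on the boundary components''); as stated, a boundary analysis alone is not a complete scheme, since interior critical points must also be excluded, and because of the $(\beta-\alpha)$ term this two-variable maximization is exactly where the content of \eqref{bestbound1} lies. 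The claim is true, but your proposal does not prove it. The clean way to close it --- and what the paper does --- is to pass from $k$ to $gh=\frac{1+k}{1-k}$: one computes $gh=\frac{1}{2\alpha}\left[1+\det\sigma+\sqrt{(1+\det\sigma)^2-4\det\sigma^S}\right]$, which depends on $\sigma$ only through $\alpha=\lambda_1$, $\det\sigma$ and $\det\sigma^S=\alpha\beta$ (the separate dependence on $\beta$ in your expression for $k$ disappears); then the three monotone substitutions $\det\sigma\le\alpha/\lambda$, $\det\sigma^S\ge\alpha^2$, $\alpha\ge\lambda$ yield $gh\le\frac{1+\sqrt{1-\lambda^2}}{\lambda}$ with only one-variable monotonicity and no multivariable optimization. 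Rewriting your final step in this form (or carrying out the two-variable maximization explicitly) completes the argument; the rest of your proposal can stand as is.
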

In Section \ref{sec:reformulation} we give a simpler and more geometrical proof 
of Proposition \ref{ALN-bis} based on the real formulation of the 
Beltrami equation (see Propositions \ref{ALN} and \ref{optimality}).
In \cite{AN}, pp. 63, the authors noticed that, the supremum in 
\eqref{bestbound1} is attained on specific non symmetric matrices.
As a  straightforward corollary, in  \cite{AN} the authors write the version of 
Astala's theorem which is adequate for matrices belonging to  
$\ms\left(\lambda,\Om\right)$ that we recall here in an informal way. 
Any $\sigma$-harmonic function with $\sigma\in \ms\left(\lambda,\Om\right)$ 
satisfies the property
$|\nabla u |\in L^{p_{K_\lambda}}_{weak}$, where 
$K_{\lambda}$ is given by \eqref{bestbound1} and $p_{K_\lambda}:={\frac{2 K_{\lambda}}{K_{\lambda}-1}}$. 
This has to be compared with the version that holds true assuming a priori 
that  $\sigma\in  \ms_{sym}\left(\lambda,\Om\right)$. 
In that case  $K_{\lambda}$ can be replaced by 
$K_\lambda^{sym}$ defined in \eqref{bestboundsym}.  
Optimality in the latter case was proved by Leonetti and Nesi \cite{LN} which began their 
work using the bound \eqref{bestboundsym} which had been already observed 
in Alessandrini and Magnanini \cite{AM}.
%The latter  observation allowed Leonetti and Nesi to prove that, 
%Optimality means that, if one assumes 
%that $\sigma$ is symmetric, i.e. that $\mu$ is real,  there exists at least a matrix 
%$\sigma$ which is $1/K$-elliptic and for which the 
%estimate $|\nabla u|\in L^{p_{K}}_{weak}$ is sharp.
Optimality means that
 there exists 
$\sigma \in \ms_{sym}(\lambda,\Om)$  for which the 
estimate $|\nabla u|\in L^{p_{K_\lambda}}_{weak}$ is sharp.

Later there has been a number of increasingly refined results showing optimality of Astala's theorem for a different class of symmetric matrices $\sigma$. Specifically Faraco \cite{F} treats the case of two isotropic materials, i.e. when $\sigma$ takes values only in the set of two matrices of the form $\{K I, \frac{1}{K} I \}$, with $I$ the identity matrix, which was originally conjectured to be optimal for the exponent $\frac{2 K}{K-1}$ by Milton \cite{M-threshold}.  
In a further advance a more refined version was given in \cite{AFS}, where the authors proved optimality in the stronger sense of exact solutions.
% vs approximate solutions i.e. a given arrangement of the two isotropic phases rather than a sequence of arrangements approaching the desired result with any desired degree of precision).

However the original question implicitly raised by Astala was apparently forgotten. 
In this paper we go back to that and we prove optimality for a generic two-phase matrix field $\sigma \in \ms\left(\lambda,\Om\right)$. 
%We will deduce this result as a particular case of a much more general result. 
%We then ask a different but related question. 
To describe our approach  let us first recall that when $\sigma$ is smooth, the corresponding $\sigma$-harmonic function is necessarily smooth and hence with bounded gradient. So the issue of higher exponent of integrability is really related to discontinuous coefficients. The simplest class of examples is when one has a conductivity taking only two values. We therefore ask the following questions. Given two positive definite matrices, 
$\sigma_1$ and $\sigma_2$, consider the class of matrices 
$\sigma\in \ms\left(\lambda,\Om\right)$ of the special form  
$\sigma(x)= \sigma_1\chi_{E_1} +\sigma_2\chi_{E_2}$, 
where $\{E_1, \, E_2\}$ is a measurable partition of $\Om$ and $\chi_{E_i}$ denotes  the characteristic function of the set $E_i$. In the jargon of composite materials this is called a  two-phase composite. What is the best possible information one can extrapolate from Astala's Theorem? As already explained, to the ellipticity $\lambda$ of $\sigma$ there corresponds  a suitable constant $K(\sigma)$ in the Beltrami equation. 
We are naturally led to  the following related question: given $\mu,\nu\in L^\infty(\Om;\C)$ satisfying \eqref{ellipticity-munu-K} with $K(\mu,\nu)>1$, 
is it possible to transform $\mu$ and $\nu$, by a suitable change of variables, specifically, by  affine  transformations, in order to decrease $K$ and thus gain a better integrability for the solution of the transformed Beltrami equation?  The key observation here is that the summability  of  solutions of the Beltrami equation  is invariant under such transformations, while   $K(\mu,\nu)$ is not. It is then well defined the minimal Beltrami constant $K^{min}$ attainable under such transformations. In Proposition \ref{sc}  we find an explicit formula for such $K^{min}$ in terms of all the entries of $\sigma_1$ and $\sigma_2$. Moreover, $K^{min}$ gives a sharp measure of the integrability properties of solutions to \eqref{elliptic}. This is stated in Theorem \ref{thm:sigma-thm}, which, for the reader's convenience,  we reformulate 
here in a more informal way.

 \begin{theorem}\label{thm:sigma-thm0}
i) Let $\sigma\in \ms(\lambda,\Om)$ with $\sigma\in\{\sigma_1,\sigma_2\}$.
Every $\sigma$-harmonic function $u$ satisfies    $\nabla u \in L^p_{loc}(\Om)$ for every $p\in [2, p_{K^{min}})$. 
ii) There exist $\sigma\in \ms(\lambda,\Om)$ with 
$\sigma\in\{\sigma_1,\sigma_2\}$ and a  
$\sigma$-harmonic function with affine boundary conditions such that,  for every ball $B\subset\Om$
\begin{equation}\label{esplode}
\int_B |\nabla u| ^{p_{K^{min}}} dx = \infty \,.
\end{equation}
\end{theorem}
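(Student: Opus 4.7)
The plan is to exploit the affine invariance of $L^p$-integrability of gradients for quasiregular mappings, reducing both statements to known results for the Beltrami equation at the optimal constant $K^{min}$.

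For part (i), I would first associate to the $\sigma$-harmonic function $u$ its complex extension $f$ solving \eqref{beltrami} with the piecewise constant dilatations $(\mu,\nu)$ given by \eqref{mu-nu(sigma)}. Since $\sigma$ takes only two values, so do $(\mu,\nu)$. By the definition of $K^{min}$ and the explicit formula of Proposition \ref{sc}, there exist affine changes of variables (on the domain, or equivalently a real-linear post-composition on the target) which transform $f$ into a $K^{min}$-quasiregular mapping $\tilde f$. Astala's theorem then gives $|\nabla \tilde f|\in L^{p_{K^{min}}}_{weak,loc}$, hence $|\nabla \tilde f|\in L^p_{loc}$ for every $p\in[2,p_{K^{min}})$. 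Pulling back by the inverse affine transformations preserves local $L^p$-integrability of the gradient up to bounded linear factors, yielding the claim for $\nabla u$.

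For part (ii), the strategy is to reverse this reduction. One starts from a canonical pair of symmetric matrices for which an exact solution attaining Astala's bound is explicitly known: the isotropic pair $\{KI,K^{-1}I\}$ treated by Faraco, refined to exact solutions with affine boundary data by Astala, Faraco and Sz\'ekelyhidi \cite{AFS}. The associated gradient fails to lie in $L^{p_K}$ on every ball. Choosing $K=K^{min}$ and applying the inverse of the affine transformation from (i), the canonical pair is carried onto the prescribed pair $\{\sigma_1,\sigma_2\}$, the microgeometry $\{E_1,E_2\}$ becomes a measurable partition of a transformed domain, and the exact solution is converted into a $\sigma$-harmonic function. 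The boundary datum stays affine because affine transformations of domain and target preserve affinity, and the bi-Lipschitz nature of the change of variables transfers the blow-up \eqref{esplode} to every ball $B\subset\Om$.

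The main obstacle, and the step requiring genuine work, is to verify that the orbit of the isotropic pair $\{K^{min} I,(K^{min})^{-1} I\}$ under the admissible class of affine transformations (both on the domain and on the target) actually covers every admissible two-phase pair $\{\sigma_1,\sigma_2\}$ with Beltrami minimum equal to $K^{min}$. This is exactly the content encoded in Proposition \ref{sc}: the explicit formula there should be read as saying that every two-phase configuration is affinely equivalent to an isotropic one with dilatation $K^{min}$. Once this normal form is established, both the upper bound in (i) and the sharp example in (ii) follow by direct transfer of the Astala and Astala--Faraco--Sz\'ekelyhidi results through the affine invariance, and the verification that affine maps change $|\nabla u|^p$ by only a bounded Jacobian factor when integrated over balls.
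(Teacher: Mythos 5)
Your part (i) is essentially the paper's argument: associate to $u$ the Beltrami solution $f$, apply the affine invariance (here encoded in the real formulation \eqref{trasformatedet}) to reduce the dilatation to $K^{min}$, invoke Astala's theorem, and pull back. That is Proposition \ref{kmin} plus the dictionary between $u$ and $f$.

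Part (ii), however, contains a genuine gap at exactly the step you flag as ``requiring genuine work.'' You claim that the orbit of the isotropic pair $\{K^{min}I,(K^{min})^{-1}I\}$ under affine transformations of domain and target covers every two-phase pair $\{\sigma_1,\sigma_2\}$ with the same $K^{min}$, and you attribute this to Proposition \ref{sc}. This is false, and Proposition \ref{sc} says nothing of the sort (it is only a formula for $K^{min}$). To see the failure concretely: for the isotropic pair one has $G(\sigma_1)=G(\sigma_2)=I$ by \eqref{ghsigma}, and the transformations \eqref{trasformatedet} act on both phases by the \emph{same} $A,B\in SL(2)$, so the transformed data always satisfy $\tilde G_1=\tilde G_2=B^TB$. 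Via \eqref{sigma(G,H)bis} this forces the symmetric parts $\sigma_1^S,\sigma_2^S$ of the two transformed phases to be scalar multiples of one another. But the canonical normal form the paper lands on after the reduction of Proposition \ref{simudiago}, namely $\sigma_1=\diag(S_1,K^{min})$, $\sigma_2=\diag(S_2,1/K^{min})$ with $1/K^{min}\le S_i\le K^{min}$, has symmetric parts $\diag(S_1,K^{min})$ and $\diag(S_2,1/K^{min})$ which are generically not proportional (take, e.g., $K^{min}=4$, $S_1=1$, $S_2=2$). So the isotropic orbit misses most of the relevant configurations, and the paper itself notes this: ``In our case $\U$ does not enjoy conformal invariance, due to the anisotropy of the coefficients $\sigma_i$. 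Therefore, we have to proceed in a different way.''

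The missing content is Theorem \ref{thm:afsz-generalized}: a new convex-integration construction (Baire category method with a modified staircase laminate, Lemma \ref{lemma:barycenter}) that produces an exact solution with critical blow-up for the full anisotropic class \eqref{eq:critical-matrices}, not just for the isotropic case. That construction cannot be obtained by affinely transporting the Astala--Faraco--Sz\'ekelyhidi example, and it is where the real proof of part (ii) lives. Without it, your argument only establishes (ii) for those $\{\sigma_1,\sigma_2\}$ whose associated $(G_i,H_i)$ happen to lie in the affine orbit of the isotropic data.
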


A key step to prove Theorem \ref{thm:sigma-thm0} is to prove the optimality of 
Astala's Theorem for a new class of  symmetric conductivities, 
specifically, for matrices of the form
\begin{equation}\label{matsim}
\sigma=\chi_{E_1} \diag(S_1,\lambda^{-1})+
\chi_{E_2} \diag(S_2,\lambda)
\,, \quad \text{ with } \lambda\le S_1, \, S_2 \le \lambda^{-1}\,,
\end{equation}
thus generalizing the isotropic case $S_1= \lambda^{-1}$, $S_2 = \lambda$, considered  in  \cite{AFS} and \cite{A}.

As a corollary of Theorem \ref{thm:sigma-thm0}, we prove that the bound 
\eqref{bestbound1} for non symmetric matrices too is optimal. 
Indeed, there exists  
$\sigma\in \ms(\lambda,\Om)$ of the form
\begin{equation}\label{matS}
\sigma=\chi_{E_1} \left(
\begin{array}{cc}
a & b \\
-b & a 
\end{array}
\right)
+
\chi_{E_2} \left(
\begin{array}{cc}
a & -b \\
b & a 
\end{array}
\right)
\,, \quad \text{ with } a=\lambda,\, b=\pm\sqrt{1-\lambda^2}\,
\end{equation}
and a $\sigma$-harmonic function $u$ such that 
the bound $\nabla u\in L^{p_{K_\lambda}}_{weak}$ is sharp 
(see Theorem \ref{mainthmS}).

%A direct computation shows that $\sigma\in \ms(\lambda,\Om)$, and that  $K^{min} = K_{\lambda}$, 
%(see Proposition \ref{sc}), 
%where $K_{\lambda}$ is defined in  \eqref{bestbound1}. We deduce that, also for non symmetric conductivities,  
%the bound $\nabla u\in L^{p_{K_\lambda}}_{weak}$ is sharp (see Theorem \ref{mainthmS}). 

Finally, a natural question, both in the symmetric and in the non symmetric case,  is wether 
there are other  two-phase critical coefficients, that is to say, 
two-phase coefficients $\sigma$ for which the bounds in 
Proposition \ref{ALN-bis} are attained and optimal in the sense of  \eqref{esplode}.  
In Theorem \ref{rigiditythm} we give a complete answer to this question, characterizing all the critical conductivities with fixed ellipticity.
In the symmetric case,  the critical conductivities
%(corresponding to solutions with critical gradient integrability) 
are given (up to rotations)  exactly by those  in \eqref{matsim} (for suitable partitions $E_1,\, E_2$).
In the non symmetric case,   the only critical conductivities are as  in \eqref{matS}. 
 
We remark that one can find optimal microgeometries  for $\sigma$'s which are not two-phase. The simplest example is given by a ``polycrystal'' like in the first example given in Leonetti and Nesi \cite{LN}. In that case $\sigma$ is symmetric, the eigenvalues are $\lambda$ and $\lambda^{-1}$ but the eigenvectors change from point to point.

\section{More about $\sigma$-harmonic functions and the Beltrami system}

\no
In the present section we review some well-known connections between $\sigma$-harmonic functions  
and the Beltrami system   which we use in the rest of the paper. We refer the interested reader to \cite{AN} for a more detailed presentation of the argument. 
%\par
%\subsection{Notation}
%We denote by $\Md$ the space of real $2\times 2$ matrices, by
%$\Msim$ the subspace of symmetric matrices, and by $\Msdet$ the subset of  
%symmetric matrices with determinant one. The space of real $2\times 2$ invertible matrices is denoted by $GL(2)$, 
%and its subset of matrices with determinant one is denoted by $GL_1(2)$. 

\par
%Let $\Om\subset\R^2$ be an open bounded set with Lipschitz continuous boundary. 
\subsection{Complex vs real formulation of a Beltrami system}

\no
Consider the Beltrami equation \eqref{beltrami}. It  can be rewritten in the equivalent form
\begin{equation}\label{beltramiGH}
Df^t H Df = G \det Df\,,
\end{equation}
where $G$ and $H$ are real matrix fields depending on $\mu$ and $\nu$.
Specifically,
\begin{align}\label{GH}
G=\frac{1}{d}
\left(
\begin{array}{cc}
|1+ \mu |^2-|\nu|^2&2\Im (\mu)\\
2\Im (\mu)&|1-\mu |^2-|\nu|^2
\end{array}
\right),
\\
\nonumber
H=\frac{1}{d}
\left(
\begin{array}{cc}
|1-\nu |^2-|\mu|^2&-2\Im (\nu)\\
-2\Im (\nu)&|1+\nu |^2-|\mu|^2
\end{array}
\right),
\end{align}
where
$$
d  =
\sqrt{(1-(|\nu|-|\mu|)^2)(1-(|\nu|+|\mu|)^2)}.
$$
We will refer to \eqref{beltrami} as well as to \eqref{beltramiGH} as the Beltrami system.
Let
$SL(2)$ be the subset  of $\M$ of the invertible matrices with determinant one, and let  $\Msdet = \Msim\cap SL(2)$. 
Notice that $G$ and $H$  belong to $\Msdet$   and they  are positive definite. In fact injective solutions to \eqref{beltramiGH} have a very neat geometrical interpretation. They are mapping $f:\Om\to\Om^{\prime}$ which are conformal, i.e., they preserves angles, provided  one uses the right scalar products, namely the one induced by $G$ in $\Om$ and $H$ in $\Om^{\prime}$. This interpretation has many consequences. We will get back to this point later in the paper.
%Notice that $G$ and $H$ belong to $\Msdet$ and are positive definite. 
%In other words, $f$ is solution to \eqref{beltramiGH} if and only if 
%$f$ is a conformal map from $\C$ to $\C$, 
%endowed with the scalar product associated with $G$ and $H$, respectively. 
Inversion of the above formulas yields
\begin{equation*}
\label{munu(HG)}
\mu =\frac{G_{11}-G_{22}+2 i G_{12}}{G_{11}+G_{22}+H_{11}+H_{22}},\quad \nu=\frac{H_{22}-H_{11}-2 i H_{12}}{G_{11}+G_{22}+H_{11}+H_{22}}.
\end{equation*}
By combining \eqref{GH} and \eqref{mu-nu(sigma)} we obtain a formula for $G$ and $H$ as 
functions of $\sigma$,
\begin{equation}\label{ghsigma}
G(\sigma)= \frac{1}{\sqrt{\det{\sigma^S}}}
\left(
\begin{array}{cc}
\sigma_{22}&-\frac{\sigma_{12}+\sigma_{21}}{2}\\
-\frac{\sigma_{12}+\sigma_{21}}{2}&\sigma_{11}
\end{array}
\right), \,
H(\sigma)=\frac{1}{\sqrt{\det{\sigma^S}}} 
\left(
\begin{array}{cc}
 \det \sigma&\frac{-\sigma_{12}+\sigma_{21}}{2}\\
\frac{-\sigma_{12}+\sigma_{21}}{2}&1
\end{array}
\right)\,,
\end{equation}
where $\dsp\sigma^S = \frac{\sigma + \sigma^T}{2}$.
Inversion of \eqref{ghsigma} gives 
\begin{equation}\label{sigma(G,H)bis}
\sigma=
\frac{1}{H_{22}}
\left(G^{-1}+ H_{12}J\right)
\end{equation}
where 
\begin{equation}\label{simp}
J= \left(\begin{array}{cc}
0&-1\\
1&0
\end{array}\right).
\end{equation}
Moreover, we can express $\sigma$ as a function of $\mu, \, \nu$ 
inverting the  algebraic system \eqref{mu-nu(sigma)},  
\begin{equation}\label{sigma(mu-nu)}
\sigma =
\left(
\begin{array}{ll}
\frac{|1-\mu|^2-|\nu|^2}{|1+\nu|^2-|\mu|^2} &  \frac{2\Im(\nu-\mu)}{|1+\nu|^2-|\mu|^2}\\
\, & \, \\
\frac{-2\Im(\nu+\mu)}{|1+\nu|^2-|\mu|^2} & \frac{|1+\mu|^2-|\nu|^2}{|1+\nu|^2-|\mu|^2}
\end{array}
\right)\,.
\end{equation}
Let us clarify the relationship between the Beltrami equation and $\sigma$-harmonic maps.
Given positive definite matrices $G$ and $H$ in $L^\infty(\Om;\Msdet)$, 
let $f=(u,v)$ be solution to \eqref{beltramiGH}. 
Then, the function $u$ is $\sigma$-harmonic, with $\sigma$  defined by \eqref{sigma(G,H)bis}. 
Conversely, given $\sigma$ satisfying the ellipticity conditions 
\eqref{ellipticity-sigma1}-\eqref{ellipticity-sigma2} and given 
a $\sigma$-harmonic function $u$, the map $f:= (u,v)$ solves \eqref{beltramiGH}, 
where $G$ and $H$ are defined by \eqref{ghsigma},  $v$ is such that 
\begin{equation}\label{stream}
J^T \nabla  v =  \sigma \nabla u, 
\end{equation} 
and  $J^T$ is the transpose of $J$ defined in \eqref{simp}.
The function $v$ is called {\it stream function} of $u$, and is defined up to additive constants.  
Moreover, $\dsp \|\nabla f\|_{L^p}$ is finite if and only if $\dsp \|\nabla u\|_{L^p}$ is finite.

\subsection{Different formulations of ellipticity and higher gradient integrability}
\label{sec:reformulation}
Here we introduce classical notions of ellipticity for elliptic and Beltrami equations, and we recall the fundamental summability results due to Astala \cite{A}
and some of its consequences due to Leonetti  and Nesi \cite{LN}.
From now on, we will always assume that the values of $\mu\,, \nu \,,G\,,H$ and $\sigma$ are related 
according to \eqref{mu-nu(sigma)} and \eqref{GH}.
\par
The ellipticity corresponding to any pair $\mu, \nu\in L^\infty(\Om;\C)$ satisfying \eqref{ellipticity-munu}
is the positive constant $k(\mu,\nu)$ defined by 
\begin{equation}\label{kBmn}
k(\mu,\nu) :=  \| |\mu|+|\nu| \|_{L^\infty}.
\end{equation}
An alternative  measure of ellipticity, that will be most convenient in our analysis, is provided by the following quantity  
\begin{equation}\label{KBmn}
K(\mu,\nu):= \frac{1+ k(\mu,\nu)}{1-k(\mu,\nu)}\,.
\end{equation}
Having in mind \eqref{GH}, we define $k(G,H)$ and $K(G,H)$ in the obvious way, i.e., 
\begin{equation}\label{defestesa}
k(G,H) = k(\mu,\nu), \quad K(G,H) = K(\mu,\nu)\,,
\end{equation}
and whenever no confusion may arise, we will omit the dependance on their argument.
In the next proposition we give a  more explicit formula for such ellipticity. 
We will denote by  $g(x)$ e $h(x)$ the maximum eigenvalue  of $G(x)$ and $H(x)$, respectively.  

\begin{proposition}\label{prodotto-autovalori}
Let $G,\, H\in L^\infty(\Om;\Msdet)$ be positive definite. Then 
\begin{equation}\label{formula-K}
K= \| g \, h\|_{L^\infty(\Om)}\,. 
\end{equation}
\end{proposition}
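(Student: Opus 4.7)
My plan is to verify the identity $g(x)h(x) = (1+|\mu(x)|+|\nu(x)|)/(1-|\mu(x)|-|\nu(x)|)$ pointwise a.e., after which the conclusion follows by taking essential suprema.

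The first reduction is the observation that since $G,H \in \Msdet$ are symmetric and positive definite with determinant one, each has eigenvalues of the form $\{g, 1/g\}$ and $\{h, 1/h\}$ with $g,h \ge 1$, so the maximum eigenvalue is determined entirely by the trace via the larger root of $x^2 - (\tr G) x + 1 = 0$, namely $g = \tr(G)/2 + \sqrt{(\tr G)^2-4}/2$, and similarly for $h$. The plan is therefore to compute $\tr G$ and $\tr H$ from \eqref{GH}, then solve and multiply.

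Setting $a=|\mu|$, $b=|\nu|$ (so the ellipticity \eqref{ellipticity-munu} reads $a+b<1$ a.e.), adding the diagonal entries in \eqref{GH} and recalling that $|1+\mu|^2+|1-\mu|^2=2(1+a^2)$, I get
\[
\tr G = \frac{2(1+a^2-b^2)}{d}, \qquad \tr H = \frac{2(1+b^2-a^2)}{d},
\]
where $d^2 = (1-(a-b)^2)(1-(a+b)^2) = (1-a^2-b^2)^2 - 4a^2 b^2$. A short calculation then gives
\[
(\tr G)^2 - 4 = \frac{4\bigl[(1+a^2-b^2)^2 - d^2\bigr]}{d^2} = \frac{16 a^2}{d^2},
\]
using the factorization $(1+a^2-b^2)^2 - (1-a^2-b^2)^2 = 4a^2(1-b^2)$. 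Hence $\sqrt{(\tr G)^2 - 4} = 4a/d$ and
\[
g = \frac{(1+a)^2 - b^2}{d} = \frac{(1+a+b)(1+a-b)}{d},
\]
and by the symmetric computation $h = (1+a+b)(1+b-a)/d$.

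Multiplying, the factor $(1+a-b)(1+b-a) = 1-(a-b)^2$ cancels against the corresponding factor of $d^2$, leaving
\[
g\, h = \frac{(1+a+b)^2}{1-(a+b)^2} = \frac{1+a+b}{1-a-b},
\]
which is exactly $(1+|\mu|+|\nu|)/(1-|\mu|-|\nu|)$ pointwise a.e. Since the map $t \mapsto (1+t)/(1-t)$ is strictly increasing on $[0,1)$, taking the essential supremum commutes with it, and so $\||gh|\|_{L^\infty} = (1+k)/(1-k) = K$ by \eqref{kBmn}--\eqref{KBmn}, which is \eqref{formula-K}.

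The only real obstacle is the algebraic simplification of $(\tr G)^2 - 4$ and the subsequent cancellation in $gh$; everything else is bookkeeping. No step uses information beyond the explicit formulas \eqref{GH} for $G,H$ in terms of $\mu,\nu$, the determinant-one constraint, and the definitions \eqref{kBmn}--\eqref{defestesa}.
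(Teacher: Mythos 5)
Your proof is correct and follows essentially the same route as the paper's: the paper likewise reduces to computing the largest eigenvalues $g,h$ of $G,H$ in terms of $|\mu|,|\nu|$ and then observes that the product simplifies to $(1+|\mu|+|\nu|)/(1-|\mu|-|\nu|)$, from which the $L^\infty$-norm statement follows by monotonicity of $t\mapsto(1+t)/(1-t)$. What you have done is fill in the "direct computation" the paper leaves implicit, deriving $g,h$ from the trace and the determinant-one constraint; your factorizations and cancellations check out.
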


\begin{proof}
A direct computation shows that the maximum eigenvalues of $G$ and $H$ are given by
$$
g=\frac{\sqrt{(1-|\nu|+|\mu|)(1+|\nu|+|\mu|)}}{\sqrt{(1+|\nu|-|\mu|
))(1-|\nu|-|\mu|))}},
\qquad
h=\frac{\sqrt{(1+|\nu|-|\mu|)(1+|\nu|+|\mu|)}}{\sqrt{(1-|\nu|+|\mu|
))(1-|\nu|-|\mu|))}}.
$$
Therefore
$\dsp
gh= \frac{1+|\mu|+|\nu|}{1-(|\mu|+|\nu|)},
$
which yields  
$$
\| g h\|_{L^\infty}= \frac{1 + \| |\mu|+|\nu| \|_\infty}{1 -  \| |\mu|+|\nu| \|_\infty} = \frac{1 + k}{1 - k} = K\,.
$$
\end{proof}
Next, we relate the ellipticity bounds for the second 
order elliptic operator \eqref{elliptic} with the ellipticity of the associated Beltrami equation. 
Following the notation of \eqref{defestesa}, we set 
$K(\sigma):= K(G,H)$, where $G,H$ and $\sigma$ are related by \eqref{ghsigma}-\eqref{sigma(G,H)bis}.
The following result has been proved in \cite{LN} and \cite{AN}; for the reader's convenience, we give here a  proof based on Proposition~\ref{prodotto-autovalori}. 

\begin{proposition}\label{ALN}
Let $\lambda\in(0,1]$. 
For each
$\dsp\sigma\in \ms\left(\lambda,\Om\right)$ we have
\begin{equation}\label{bestbound}
 K(\sigma)\leq \frac{1+\sqrt{1-\lambda^2}}{\lambda}\,.
\end{equation}
If in addition $\sigma$ is symmetric, then 
\begin{equation}\label{bestbound2}
K(\sigma)\leq \frac{1}{\lambda} \,.
\end{equation}
\end{proposition}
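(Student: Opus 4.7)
The plan is to invoke Proposition \ref{prodotto-autovalori}, which reduces both bounds to pointwise estimates on $g(x)\,h(x)$, the product of the largest eigenvalues of $G(\sigma(x))$ and $H(\sigma(x))$. Fix a point $x$ and decompose $\sigma = \sigma^S + \alpha J$, with $J$ as in \eqref{simp}; I pass to an orthonormal basis where $\sigma^S = \diag(s_1,s_2)$ with $s_1 \ge s_2$ (WLOG, up to a swap of coordinates). Then \eqref{ellipticity-sigma1} reads $s_2 \ge \lambda$, while \eqref{ellipticity-sigma2}, combined with the two-dimensional identity $(\sigma^{-1})^S = \frac{\det \sigma^S}{\det \sigma}\,(\sigma^S)^{-1}$ (obtained from $\sigma^{-1} = \Adj(\sigma)/\det\sigma$), becomes the scalar inequality $\det\sigma \le s_2/\lambda$.

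In this basis, \eqref{ghsigma} gives $G = \diag(\sqrt{s_2/s_1},\sqrt{s_1/s_2})$, because its off-diagonal $-(\sigma_{12}+\sigma_{21})/2$ vanishes, whereas
\[
H \;=\; \frac{1}{\sqrt{s_1 s_2}}\begin{pmatrix} s_1 s_2 + \alpha^2 & \alpha \\ \alpha & 1 \end{pmatrix}
\]
has determinant $1$, hence $h = \tfrac{1}{2}\bigl(\tr H + \sqrt{(\tr H)^2 - 4}\bigr)$. Using $\det\sigma = \det\sigma^S + \alpha^2$ one verifies the algebraic identity $(1+\det\sigma)^2 - 4\det\sigma^S = (\det\sigma - 1)^2 + 4\alpha^2$, which together with $g = s_1/\sqrt{\det\sigma^S}$ yields the clean pointwise formula
\[
g\,h \;=\; \frac{1 + \det\sigma + \sqrt{(\det\sigma - 1)^2 + 4\alpha^2}}{2\,s_2}.
\]

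In the symmetric case $\alpha = 0$ this collapses to $g h = \max\{s_1,\, 1/s_2\}$, which is bounded by $1/\lambda$ since $s_1 \le \det\sigma/s_2 \le 1/\lambda$ and $s_2 \ge \lambda$; this proves \eqref{bestbound2}. For the general case, I would optimize the displayed formula over the admissible set $\{s_1 \ge s_2 \ge \lambda,\ 0 \le \alpha^2 \le s_2/\lambda - s_1 s_2\}$ in three monotone reductions: (a) at fixed $\det\sigma$ and $s_2$, $g h$ is increasing in $\alpha^2$, so choosing $\alpha^2$ maximal forces $s_1 = s_2$; (b) at fixed $s_2$, $g h$ is increasing in $\det\sigma$, so we saturate $\det\sigma = s_2/\lambda$; (c) after the substitution $u := 1/s_2 \in [\lambda, 1/\lambda]$ the formula collapses to $\bigl(u + 1/\lambda + \sqrt{(u+1/\lambda)^2 - 4}\bigr)/2$, which is increasing in $u$ (since $u + 1/\lambda \ge 2$ by AM-GM) and is therefore maximized at $u = 1/\lambda$, giving $g h = (1+\sqrt{1-\lambda^2})/\lambda = K_\lambda$ and proving \eqref{bestbound}.

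The main obstacle I anticipate is the translation of the operator-theoretic ellipticity \eqref{ellipticity-sigma2} into the usable scalar inequality $\det\sigma \le s_2/\lambda$ through the two-dimensional identity for $(\sigma^{-1})^S$; once this and the algebraic simplification of $(\tr H)^2 - 4$ are in hand, the rest is elementary monotone optimization on a compact parameter region.
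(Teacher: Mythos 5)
Your proof is correct and is essentially the paper's own argument: both reduce via Proposition \ref{prodotto-autovalori} to the pointwise formula $gh = \frac{1}{2\lambda_1}\bigl[\det\sigma+1+\sqrt{(\det\sigma+1)^2-4\det\sigma^S}\bigr]$ (your version, written with $s_2=\lambda_1$ and the identity $(\det\sigma+1)^2-4\det\sigma^S=(\det\sigma-1)^2+4\alpha^2$, is the same thing), translate ellipticity into $\lambda_1\ge\lambda$ and $\det\sigma\le\lambda_1/\lambda$, and then maximize by the same monotone substitutions. The only cosmetic difference is that you make the $\sigma=\sigma^S+\alpha J$ decomposition explicit and organize the optimization into three named steps, whereas the paper plugs the bounds in directly.
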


\begin{proof}
Let $\lambda_1,\lambda_2$ be the eigenvalues of $\sigma^S$, with $\lambda_1\leq\lambda_2$. 
Then, from the assumption $\dsp \sigma\in \ms\big(\lambda,\Om\big)$ and the 
relationship 
$$
(\sigma^{-1})^S=\frac{\det\sigma^S}{\det\sigma}(\sigma^S)^{-1}\,,
$$
it follows 
\begin{align}
\label{disug1}\lambda_2\geq &\lambda_1\geq\lambda \,,\\
\label{disug2}\frac{\det\sigma^S}{\lambda_2\det\sigma}= &\frac{\lambda_1}{\det\sigma} \geq\lambda\,.
\end{align}
Next let $g$ and $h$ be the largest eigenvalue of $G$ and $H$ respectively. 
By \eqref{sigma(G,H)bis}, it is readily seen that 
$$
\sigma^S=\frac{1}{H_{22}}G^{-1} \,.
$$
and hence
\begin{equation}\label{formula-g}
g=\frac{1}{H_{22}}\frac{1}{\lambda_1}=\frac{\sqrt{\det\sigma^S}}{\lambda_1}\,.
\end{equation}
From \eqref{ghsigma} it follows
\begin{equation}\label{eq-h}
h+\frac{1}{h}=\frac{1}{\sqrt{\det\sigma^S}}(\det\sigma +1)\,.
\end{equation}
Set 
$
\displaystyle P:=\frac{\det\sigma +1}{\sqrt{\det\sigma^S}}\,.
$ 
Solving \eqref{eq-h} and choosing the root which is bigger than one, yields
\begin{equation}\label{formula-h}
h=\frac{P+\sqrt{P^2-4}}{2}\,.
\end{equation}
Then, using \eqref{formula-g}-\eqref{formula-h} and the inequalities \eqref{disug1}-\eqref{disug2}, 
we obtain the following upper bound for $gh$

\begin{align*}
gh & = \frac{1}{2\lambda_1}\left[\det\sigma +1+\sqrt{(\det\sigma+1)^2-4\det\sigma^S}\,\right] \\
     &\leq \frac{1}{2\lambda_1}
        \left[\frac{\lambda_1}{\lambda}+1+\sqrt{\Big(\frac{\lambda_1}{\lambda}+1\Big)^2-4\lambda_1^2}\,\right] \\
     &=\frac{1}{2}\left(\frac{1}{\lambda}+\frac{1}{\lambda_1}+\sqrt{\frac{1}{\lambda^2}+\frac{1}{\lambda_1^2}+
          \frac{2}{\lambda\lambda_1}-4} \,\right) \\
     & \leq \frac{1}{2}\left(\frac{2}{\lambda}+\sqrt{\frac{4}{\lambda^2}-4}\,\right) \\
     & = \frac{1+\sqrt{1-\lambda^2}}{\lambda}\,.
\end{align*}
\par

Now suppose that $\sigma$ is symmetric and denote by $\lambda_1$ and $\lambda_2$ its eigenvalues, 
with $\lambda_1\leq\lambda_2$. 
Since $\sigma\in \ms\big(\lambda,\frac{1}{\lambda},\Om\big)$, we have
\begin{equation}\label{stime-autovalori}
\lambda\leq\lambda_1\leq\lambda_2\leq\frac{1}{\lambda}\,.
\end{equation}
Formula \eqref{ghsigma} reduces itself to
$$
G=\sqrt{\det\sigma}\sigma^{-1}\,,\qquad 
H=\frac{1}{\sqrt{\det{\sigma}}} 
\left(
\begin{array}{cc}
 \det \sigma& 0\\
                0 &1
\end{array}
\right)\,.
$$
Therefore 
\begin{equation}\label{ellisim}
 g= \frac{1}{\lambda_1}\sqrt{\det\sigma}, \qquad 
 h=\frac{1}{\sqrt{\det\sigma}}\max\{\lambda_1 \lambda_2,1\}.
\end{equation}
In the case when $\lambda_1\lambda_2\leq 1$, we find 
$$
K=\Big\|\frac{1}{\lambda_1}\Big\|_{L^\infty}\leq \frac{1}{\lambda}\,.
$$
If otherwise $\lambda_1\lambda_2\geq 1$, we have
$$
K=\|\lambda_2\|_{L^\infty}\leq \frac{1}{\lambda}\,.
$$
\end{proof}
\no
In the next Proposition we look at conductivities $\sigma$ attaining the bounds \eqref{bestbound} and \eqref{bestbound2}.

\begin{proposition}\label{optimality}
Let $\dsp\sigma\in \ms\left(\lambda,\Om\right)$ 
for some $\lambda\in(0,1]$. Then the bound \eqref{bestbound} is attained
if and only if on a set of positive measure there holds
\begin{equation}\label{matriceS}
\sigma=\left(
\begin{array}{cc}
a & b \\
-b & a 
\end{array}
\right)\,, \quad \text{ with } a=\lambda,\, b=\pm\sqrt{1-\lambda^2}\,.
\end{equation}
Moreover, if $\sigma$ is symmetric \eqref{bestbound2} is attained if and only if either \eqref{ellipticity-sigma1} or 
\eqref{ellipticity-sigma2} is attained on a set of positive measure.
\end{proposition}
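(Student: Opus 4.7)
The plan is to revisit the chain of inequalities in the proof of Proposition \ref{ALN} and characterize pointwise in $x$ the equality cases at each step. Since $K(\sigma) = \|gh\|_{L^\infty(\Om)}$ and the bounds $gh(x)\leq (1+\sqrt{1-\lambda^2})/\lambda$ and (in the symmetric case) $gh(x)\leq 1/\lambda$ were derived pointwise, the assertion that the overall bound is ``attained on a positive measure set'' is equivalent to the pointwise bound being sharp on that set. The equivalence in the proposition therefore reduces to an algebraic characterization of the matrices $\sigma(x)$ which achieve pointwise equality; the ``if'' direction then follows by direct substitution, and the ``only if'' direction by collecting the equality conditions extracted from each estimate.

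For the non-symmetric bound \eqref{bestbound}, the proof of Proposition \ref{ALN} used three ingredients: (i) $\det\sigma\leq\lambda_1/\lambda$ from \eqref{disug2}, (ii) $\det\sigma^S=\lambda_1\lambda_2\geq\lambda_1^2$, and (iii) $\lambda_1\geq\lambda$ from \eqref{disug1}. The auxiliary function $(d,s)\mapsto d+1+\sqrt{(d+1)^2-4s}$ is strictly increasing in $d$ and strictly decreasing in $s$ on its domain, so the first substitution is sharp iff simultaneously $\det\sigma=\lambda_1/\lambda$ and $\lambda_2=\lambda_1$; the last estimate is sharp iff $\lambda_1=\lambda$. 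Taking these together forces $\lambda_1=\lambda_2=\lambda$, hence $\sigma^S=\lambda I$, together with $\det\sigma=1$. Writing $\sigma=\lambda I + c J$ with $J$ as in \eqref{simp}, the condition $\det\sigma=\lambda^2+c^2=1$ gives $c=\pm\sqrt{1-\lambda^2}$, which is precisely the form \eqref{matriceS}. For the converse, direct substitution into the closed-form expression for $gh$ from the proof of Proposition \ref{ALN} shows $gh=(1+\sqrt{1-\lambda^2})/\lambda$ at every such $\sigma$.

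For the symmetric case, using \eqref{ellisim} one sees that the two sub-cases $\lambda_1\lambda_2\lessgtr 1$ appearing in the proof of Proposition \ref{ALN} collapse into the single identity $g(x)h(x)=\max\{1/\lambda_1(x),\,\lambda_2(x)\}$. Then $gh\leq 1/\lambda$ reduces to the pair of inequalities $\lambda_1\geq\lambda$ and $\lambda_2\leq 1/\lambda$, and pointwise equality $\max\{1/\lambda_1,\lambda_2\}=1/\lambda$ holds iff $\lambda_1=\lambda$ or $\lambda_2=1/\lambda$. Since $\lambda_1$ is the minimum of $\sigma\xi\cdot\xi/|\xi|^2$ over unit $\xi$, the first alternative is exactly the attainment of \eqref{ellipticity-sigma1}; since $1/\lambda_2$ is the minimum of $\sigma^{-1}\xi\cdot\xi/|\xi|^2$, the second is the attainment of \eqref{ellipticity-sigma2}. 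The only (modest) obstacle in executing the plan is tracking the monotonicity of the two-variable function in the non-symmetric case in order to derive the simultaneous equality constraints $\det\sigma=\lambda_1/\lambda$ and $\lambda_2=\lambda_1$ from a single substitution; everything else is routine verification.
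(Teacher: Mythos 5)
Your proof is correct and follows essentially the same route as the paper's (which simply asserts, without spelling out the monotonicity argument, that the bound is attained iff the inequalities \eqref{disug1}--\eqref{disug2} hold as equalities, forcing $\lambda_1=\lambda_2=\lambda$ and $\det\sigma=1$, then leaves the symmetric case to the reader). You have merely filled in the routine equality-tracking and verified the converse by direct substitution, so the two arguments coincide.
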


\begin{proof}
Keeping the notation introduced in the proof of Proposition \ref{ALN}, one can see that 
the bound \eqref{bestbound} is attained if and only if the inequalities \eqref{disug1}-\eqref{disug2} hold 
as equalities, namely, 
$$
\lambda_2 = \lambda_1= \lambda \,,\qquad
\frac{\lambda_1}{\det\sigma} =\lambda\,.
$$
It is readily seen that this is equivalent to \eqref{matriceS}. The symmetric case is left to the reader.
\end{proof}

We now recall the higher integrability results for gradients of solutions to 
\eqref{elliptic} and \eqref{beltrami}. 
For $K>1$, set $\dsp p_K:=\frac{2K}{K-1}$.
We start with the celebrated result in \cite{A}.

\begin{theorem}\label{astala}
Let $f\in H^1_{loc}(\Om;\C)$ be solution to \eqref{beltrami} with $K(\mu,\nu)>1$. Then
$$
\nabla f\in L^p_{loc}(\Om) \qquad \forall \,  p\in [2, p_{K(\mu,\nu)})\,.
$$  
\end{theorem}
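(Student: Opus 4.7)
The plan is to follow Astala's original strategy from \cite{A}, which combines the classical theory of planar quasiconformal mappings with holomorphic deformation methods.

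First, I would reduce the local claim to a global normalized one. By Stoilow's factorization, any $K$-quasiregular map $f$ factors locally as $f=\varphi\circ g$, where $\varphi$ is holomorphic and $g$ is $K$-quasiconformal; since composition with the holomorphic factor does not decrease gradient integrability, the issue reduces to controlling $\nabla g$. One can then extend the Beltrami coefficients by zero outside a large disk and invoke the measurable Riemann mapping theorem to replace $g$ by a principal $K$-quasiconformal homeomorphism $\phi\colon\C\to\C$ that is conformal near infinity with $\phi(z)=z+O(1/|z|)$.

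Second, the decisive analytic input is the \emph{area distortion theorem}: for such a principal $K$-quasiconformal $\phi$ and every measurable $E\subset\Ba\subset\C$,
\begin{equation*}
|\phi(E)|\le M\,|E|^{1/K},
\end{equation*}
with a constant $M$ depending only on $K$. This estimate immediately yields the weak bound $J_{\phi}\in L^{K/(K-1)}_{weak,loc}(\C)$ by evaluating the distribution function of $J_\phi$ via the change of variables formula $|\phi(E)|=\int_E J_\phi\,dx$ and Chebyshev's inequality applied to superlevel sets of $J_\phi$.

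Third---and this is the main obstacle---one proves the area distortion estimate by holomorphic deformation. Embed the Beltrami coefficient in a holomorphic family $\mu_t=t\,\mu/k(\mu,\nu)$, $t\in\Ba\subset\C$, and use the measurable Riemann mapping theorem to produce a holomorphic family $\phi_t$ of principal quasiconformal maps with $\phi_0=\mathrm{id}$ and $\phi_{k(\mu,\nu)}=\phi$. The map $t\mapsto\phi_t(z)$ is a holomorphic motion; by the Slodkowski extension of the $\lambda$-lemma it extends to a holomorphic motion of $\C$, and subharmonicity of suitable functionals of $\phi_t(E)$ in $t$, combined with the classical area theorem for conformal maps at $t=0$, yields the exponent $1/K$ through a Hadamard three-circle type argument. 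This is the technical heart of Astala's paper and cannot be shortened in an essential way.

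Finally, to pass from the Jacobian estimate to gradient integrability, use the pointwise inequality $|Df|^{2}\le K\,J_f$ valid for $K$-quasiregular maps, which gives $|Df|\in L^{p_K}_{weak,loc}(\Om)$ with $p_K=2K/(K-1)$. Since on sets of finite measure weak $L^{p_K}$ embeds into strong $L^p$ for every $p<p_K$, and since the assumption $f\in H^1_{loc}$ provides the lower endpoint $|Df|\in L^2_{loc}$, we obtain $\nabla f\in L^p_{loc}(\Om)$ for every $p\in[2,p_K)$, as required.
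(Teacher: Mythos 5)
The paper does not prove Theorem \ref{astala}: it is quoted as ``the celebrated result in \cite{A}'' and used as a black box, so there is no in-paper argument to compare against. Your sketch is a correct reconstruction of Astala's original proof --- Stoilow factorization to a principal $K$-quasiconformal map, the area distortion bound $|\phi(E)|\le M\,|E|^{1/K}$ via holomorphic motions and Slodkowski's extended $\lambda$-lemma, the weak-type estimate $J_\phi\in L^{K/(K-1)}_{weak}$ by Chebyshev and change of variables, and the distortion inequality $|Df|^2\le K\,J_f$ to reach the gradient. One point should be stated before invoking Stoilow: equation \eqref{beltrami} carries two coefficients, so first observe that $|f_{\bar z}|\le(|\mu|+|\nu|)\,|f_z|\le k\,|f_z|$ a.e.\ with $k=\||\mu|+|\nu|\|_{L^\infty}$; this rewrites the equation as $f_{\bar z}=\tilde\mu\,f_z$ for a measurable $\tilde\mu$ with $\|\tilde\mu\|_{L^\infty}\le k$, making $f$ genuinely $K$-quasiregular with $K=(1+k)/(1-k)$ and licensing both the factorization and the distortion bound. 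You use this implicitly in the final paragraph, but it is the hinge connecting the hypothesis on $(\mu,\nu)$ to the quasiconformal machinery.
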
 
Recall that  $K_\lambda$ and $K_\lambda^{sym}$ are defined by \eqref{bestbound1}
and  \eqref{bestboundsym}, respectively. A straightforward computation yields 
\begin{equation}\label{pespli}
p_{K_\lambda} = {\frac{2+2 \sqrt{1-\lambda^2}}{1-\lambda +\sqrt{1-\lambda^2}}}, \qquad p_{K_\lambda^{sym}} = \frac{2}{1+\lambda}.
\end{equation}
As a consequence of Proposition \ref{ALN} and Theorem \ref{astala}, we obtain 
the following result which was proved in \cite{LN}, \cite{AN}.

\begin{theorem}\label{ALN2}
Let $\dsp\sigma\in \ms(\lambda,\Om)$ for some $\lambda\in(0,1]$.
Then, any solution $u\in H^1_{\rm loc}(\Om)$ to \eqref{elliptic} satisfies 
$$
\nabla u\in L^p_{loc}(\Om) \quad \forall\,  
p\in\left[2, p_{K_\lambda}\right), 
$$  
and, if  $\sigma\in L^\infty(\Om;\Msim)$, 
$$
\nabla u\in L^p_{loc}(\Om) \quad \forall\,  
p\in\left[2, p_{K^{sym}_\lambda}\right), 
$$
where $ p_{K_\lambda}$ and $p_{K_\lambda^{sym}}$ are given in \eqref{pespli}. 
\end{theorem}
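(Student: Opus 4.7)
The plan is to obtain the theorem as a straightforward chain of implications from the two results already at our disposal: Proposition~\ref{ALN}, which controls the Beltrami ellipticity $K(\sigma)$ in terms of the ellipticity constant $\lambda$, and Astala's Theorem~\ref{astala}, which controls the gradient integrability of quasiregular mappings in terms of the Beltrami constant $K(\mu,\nu)$.

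First I would associate to the given $\sigma$-harmonic function $u \in H^1_{\rm loc}(\Om)$ the complex map $f=(u,v):\Om\to\C$, where $v$ is the stream function defined by \eqref{stream}. By the discussion in Section~\ref{sec:reformulation}, $f$ is a weak solution of the Beltrami equation \eqref{beltrami} with complex dilatations $\mu,\nu$ given by \eqref{mu-nu(sigma)}, and $\|\nabla u\|_{L^p}$ and $\|\nabla f\|_{L^p}$ are simultaneously finite on any compact subset of $\Om$. Hence it suffices to prove the gradient integrability statements for $f$.

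Next I would apply Proposition~\ref{ALN} to obtain
$$
K(\sigma) \leq K_\lambda = \frac{1+\sqrt{1-\lambda^2}}{\lambda},
$$
and, in the symmetric case, $K(\sigma) \leq K_\lambda^{sym} = 1/\lambda$. A quick check shows that the function $K\mapsto p_K = \frac{2K}{K-1}= 2 + \frac{2}{K-1}$ is strictly decreasing on $(1,\infty)$, which yields $p_{K(\sigma)} \geq p_{K_\lambda}$ (respectively $p_{K(\sigma)} \geq p_{K_\lambda^{sym}}$ in the symmetric case). Thus the interval $[2, p_{K_\lambda})$ is contained in $[2, p_{K(\sigma)})$.

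Finally, Astala's Theorem~\ref{astala} applied to $f$ yields $\nabla f \in L^p_{\rm loc}(\Om)$ for every $p\in[2,p_{K(\mu,\nu)})$; combined with the monotonicity inclusion above and the equivalence of integrability for $\nabla f$ and $\nabla u$, this gives both assertions of the theorem. There is no substantive obstacle here: the proof is essentially a bookkeeping exercise combining the two results, and the only point worth highlighting explicitly is the monotonicity of $K\mapsto p_K$ that lets the worst-case ellipticity bound translate into the smallest guaranteed integrability exponent.
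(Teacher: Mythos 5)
Your proof is correct and coincides with the paper's argument: the paper simply states Theorem~\ref{ALN2} is ``a consequence of Proposition~\ref{ALN} and Theorem~\ref{astala},'' which is exactly the chain you spelled out, including the reduction to the Beltrami solution $f=(u,v)$ and the monotonicity of $K\mapsto p_K$.
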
 

We are now ready to perform linear change of variables both in the domain and in the target space. It will be convenient to work with the real formulation of the equation \eqref{beltramiGH}. 
%Let $SL(2)$ be the subset  of $\M$ of the invertible matrices with determinat one. 
Let $A, B \in SL(2)$ and set
\begin{equation}\label{trasformatedet}
\tilde f(x):= A^{-1} f(Bx), \quad \tilde G(x) :=  B^t G(B x) B ,       \quad \tilde H(x) := A^{t} H(B x) A \,. 
\end{equation}
A  straightforward  computation shows that,
whenever $f:\Om\mapsto \R^2$ is solution to  \eqref{beltramiGH},  $\tilde f$   solves  
\begin{equation}\label{beltramiGHt}
D \tilde f^t  \tilde H D \tilde f = \tilde G \det D \tilde f.
\end{equation}
Clearly $\tilde f$ enjoys the same integrability properties as $f$.
This motivates the following definition,
\begin{equation}\label{teta}
K^{min}(G,H) := \min_{ A,B\in SL(2)} \| g(A,B)h(A,B)\|_{L^\infty}\,,
\end{equation}
where $g(A,B)$ and $h(A,B)$ denote the maximum eigenvalue of $\tilde G$ and $\tilde H$, respectively. 
Remark that $g(A) \ge c \|A\|^2$, $h(B)\ge c \|B\|^2$. Therefore, the minimum in \eqref{teta} is attained. 
Recalling \eqref{formula-K}, a straightforward generalization of Theorem \ref{astala} 
leads to the following result.

\begin{proposition}\label{kmin}
Let  $G,H\in \Msdet$ and let  $K^{min}(G,H)$ be defined as in \eqref{teta}.
Then any $f\in H^1_{\rm loc}(\Om;\R^2)$ solution to \eqref{beltramiGH} satisfies 
$$
\nabla f\in L^p_{loc}(\Om) \qquad \forall \,  p\in [2, p_{K^{min}(G,H)})\,.
$$   
\end{proposition}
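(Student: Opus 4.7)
The strategy is to exploit the $SL(2)\times SL(2)$ invariance of the higher integrability exponent and then quote Astala's Theorem \ref{astala}. Since the text preceding the statement already records that the minimum in \eqref{teta} is attained, fix a pair $(A^*,B^*)\in SL(2)\times SL(2)$ realising $K^{min}(G,H)$. Set $\Omt:=(B^*)^{-1}\Om$ and define
$$
\tilde f(x):=(A^*)^{-1}f(B^*x),\quad \tilde G(x):=(B^*)^t G(B^*x) B^*,\quad \tilde H(x):=(A^*)^t H(B^*x) A^*,
$$
as in \eqref{trasformatedet}. The computation recorded right before \eqref{beltramiGHt} gives $\tilde f\in H^1_{loc}(\Omt;\R^2)$ and shows that $\tilde f$ solves the real Beltrami system $D\tilde f^{t}\tilde H D\tilde f=\tilde G \det D\tilde f$ in $\Omt$.

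Next I would re-encode this equation in complex form. Inverting the formulas that relate $(\mu,\nu)$ to $(G,H)$ one obtains dilatations $\tilde\mu,\tilde\nu\in L^\infty(\Omt;\C)$ such that $\tilde f$ solves \eqref{beltrami} with $(\tilde\mu,\tilde\nu)$ in place of $(\mu,\nu)$. By the identification \eqref{defestesa}, Proposition \ref{prodotto-autovalori}, and the choice of $(A^*,B^*)$,
$$
K(\tilde\mu,\tilde\nu)=K(\tilde G,\tilde H)=\|g(A^*,B^*)\,h(A^*,B^*)\|_{L^\infty}=K^{min}(G,H).
$$
Applying Theorem \ref{astala} to $\tilde f$ in the non-degenerate regime $K^{min}(G,H)>1$ yields $\nabla\tilde f\in L^p_{loc}(\Omt)$ for every $p\in[2,p_{K^{min}(G,H)})$. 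The borderline case $K^{min}(G,H)=1$ forces $\tilde G=\tilde H=I$ a.e., i.e.\ $\tilde\mu=\tilde\nu=0$, so $\tilde f$ is holomorphic and the conclusion is trivial.

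Finally I would transport the conclusion back to $f$: from $f(y)=A^*\tilde f((B^*)^{-1}y)$ the chain rule gives $\nabla f(y)=A^*\,\nabla\tilde f((B^*)^{-1}y)\,(B^*)^{-1}$, and a linear change of variables in the $L^p$ integral transfers local $L^p$ integrability of $\nabla\tilde f$ on $\Omt$ to the same local $L^p$ integrability of $\nabla f$ on $\Om$. I do not anticipate any serious obstacle: the whole argument is a bookkeeping exercise translating between the real form \eqref{beltramiGH} and the complex form \eqref{beltrami} of the Beltrami system, plus one invocation of Astala's theorem. The only mildly delicate issue is the attainment of the minimiser $(A^*,B^*)$, which is already asserted in the paragraph preceding the statement and follows from the coercivity of $g$ and $h$ in $(A,B)\in SL(2)\times SL(2)$.
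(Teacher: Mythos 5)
Your proof is correct and takes essentially the same route the paper intends: the paper simply invokes "a straightforward generalization of Theorem \ref{astala}" after recording the change of variables \eqref{trasformatedet} and the invariance of integrability, and you have filled in exactly those details. Your careful treatment of the borderline case $K^{min}=1$, which Astala's theorem as stated does not cover, is a welcome addition.
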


\begin{remark}
From the point of view of $\sigma$-harmonic maps, Proposition \ref{kmin} may be 
rephrased by saying that any solution $u\in H^1_{\rm loc}(\Om)$ to \eqref{elliptic}, satisfies
$$
\nabla u\in L^p_{loc}(\Om) \qquad \forall \,  p\in [2, p_{K^{min}(\sigma)})\,,
$$  
where $K^{min}(\sigma)$ is defined in the obvious way, i.e., $K^{min}(\sigma):=K^{min}(G,H)$, 
and $G,H$ and $\sigma$ are related by \eqref{ghsigma}.
\end{remark}
%An analogous generalization holds for $\sigma$-harmonic maps.  
%Define $K^{min}(\sigma)$ in the obvious way, {\em i.e.}, $K^{min}(\sigma):=K^{min}(G,H)$. Then 
%Proposition \ref{kmin} reads 

%\begin{proposition}\label{kmin2}
%Let  $u$ solve \eqref{elliptic}.  Then  $u\in L^p$ 
%for every  $p < p^*(K^{min}(\sigma))$. 
%\end{proposition}

\section{Examples of weak solutions with critical integrability properties}
 
 In \cite{F}, \cite{AFS}, the authors exhibit an example of weak solution to 
 \eqref{elliptic} with critical integrability properties. In their construction the 
 essential range of $\sigma$ consists of only two isotropic matrices, namely, 
 $ \sigma:\Om\mapsto \{K^{-1} I, K I\}$ with  $K >1$.
 In this section  we 
 %review and 
 generalize their  construction to the case 
\begin{equation}\label{eq:critical-matrices}
\sigma_1:= \diag(K, S_1), 
\qquad \sigma_2:= \diag (K^{-1}, S_2 ),
\qquad \qquad \frac{1}{K}\le S_i \le K,
\end{equation}
thus proving optimality of Astala's theorem for the whole class of matrices above. In Section \ref{tpc} we will show that such class cannot be further 
enlarged. 

We will need the following definition.

\begin{definition}{\rm
The family of {\it laminates of finite order} is the smallest family of probability 
measures $\caL(\M)$ on $\M$ such that 
\begin{itemize}
\item[(i)] $ \caL(\M)$ contains all Dirac masses; 
\item[(ii)]   if  $\sum_{i=1}^n \alpha_i \delta_{A_i}\in\caL(\M)$ 
 and 
$A_1= \alpha B + (1-\alpha)C \text{ with } {\rm rank}(B-C)=1$, 
 then the  probability measure 
$\sum_{i=2}^n \alpha_i \delta_{A_i} + \alpha_1\big(\alpha B + (1-\alpha)C\big)$
 is also contained in $ \caL(\M)$.
\end{itemize}

Given $\nu\in \caL(\M)$, we define the {\rm barycenter} $\bar\nu$ of $\nu$ as
$$
\bar\nu:= \int_{\M}M \, d\nu(M) \,.
$$
}
\end{definition}

\begin{theorem}\label{thm:afsz-generalized}
Let $\sigma_1,\sigma_2$ be as in \eqref{eq:critical-matrices}.
There exists a measurable matrix field 
$\sigma: \Om \to \{\sigma_1,\sigma_2\}$ such that the solution 
$u\in H^{1}(\Om)$ to 
\begin{equation}\label{final-goal}
\begin{cases}
\div  (\sigma \nabla u) = 0 & \text{ in } \Om \\
u(x) = x_1  & \text{ on } \partial \Om \\
\end{cases}
\end{equation}
satisfies for every ball $B\subset\Om$
$$
\int_B |\nabla u| ^{p_{K}} dx = \infty \,.
$$
\end{theorem}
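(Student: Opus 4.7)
The proof will extend the convex integration / staircase laminate strategy of Faraco~\cite{F} and Astala-Faraco-Székelyhidi~\cite{AFS}, which handled the isotropic case $S_1=K$, $S_2=K^{-1}$, to the full anisotropic family \eqref{eq:critical-matrices}. The first step is to reformulate \eqref{final-goal} as a differential inclusion via the stream-function machinery of Section 2: asking that $\sigma \in \{\sigma_1,\sigma_2\}$ a.e.\ is equivalent, in terms of $f=(u,v)$ with $J^T\nabla v=\sigma\nabla u$, to the inclusion $Df(x)\in T_1\cup T_2$ a.e., where
\[
T_i=\left\{\begin{pmatrix} a & b \\ -S_i\, b & \kappa_i\, a\end{pmatrix} : a,b\in\R\right\},\qquad \kappa_1=K,\ \kappa_2=1/K.
\]
Each $T_i$ is a two-dimensional linear subspace of $\M$. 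I would produce $f$ satisfying this inclusion with affine boundary datum $f(x)=Lx$ for a suitable $L$ whose top row is $(1,0)$, which forces $u(x)=x_1$ on $\partial\Om$.

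The next step is the rank-one analysis of $T_1\cup T_2$. A direct determinant computation shows that, within each $T_i$ separately, $\det(M-M')$ is a \emph{positive definite} quadratic form in $(a-a',b-b')$, so there are no nontrivial intra-phase rank-one connections; every lamination must therefore alternate between $T_1$ and $T_2$. On the other hand, a three-parameter family of inter-phase rank-one connections is available. A convenient one-parameter subfamily is obtained by taking $M\in T_1$ with parameters $(a,b)$ and $M'\in T_2$ with parameters $(K^2 a,b)$: one checks that $M-M'=w\otimes e_1$ for some $w\in\R^2$, so the associated lamination oscillates in the $x_1$ direction and jumps the first column by a factor $K^2$.

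The heart of the proof is the construction of an AFS-type staircase laminate adapted to this alternating structure. Starting from the boundary matrix $L$, I would build inductively a sequence of laminates of finite order $\nu_N$ by splitting a residual matrix $L_n$ of norm $\sim K^{2n}$ as a rank-one convex combination $L_n=\lambda_n P_n+(1-\lambda_n)L_{n+1}$, where $P_n\in T_1\cup T_2$ is bounded and $|L_{n+1}|\sim K^{2(n+1)}$; the factor $K^2$ is precisely the one furnished by the connections just described. Choosing the splitting ratios $\lambda_n$ so that the cumulative mass on matrices of norm $\ge K^{2n}$ decays like $K^{-2n}$, a standard computation yields $\int |M|^p\, d\nu_N(M)$ bounded uniformly in $N$ for each $p<p_K=2K/(K-1)$, while $\int |M|^{p_K}\, d\nu_N(M)\to+\infty$ as $N\to\infty$. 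This matches the sharp Astala exponent.

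Finally, I would apply the Müller-Šverák / Kirchheim convex integration theorem, as used in \cite{AFS}, to realize each $\nu_N$ as the gradient distribution of a Lipschitz map $f_N:\Om\to\R^2$ with $f_N|_{\partial\Om}=Lx$ and $Df_N\in T_1\cup T_2$ a.e. Concentrating the $N$-th construction on a shrinking sequence of disjoint balls inside $\Om$ and summing (the usual localization argument) produces a single map $f=(u,v)$ with $Df\in T_1\cup T_2$ a.e., $u=x_1$ on $\partial\Om$, and $\int_B|\nabla u|^{p_K}\,dx=\infty$ for every ball $B\subset\Om$. The main obstacle is the staircase step: the zig-zag alternation forced by the absence of intra-phase rank-one connections requires careful bookkeeping of weights, but the explicit connection $a'=K^2a$ decouples the geometry from the anisotropy parameters $S_1,S_2$ and reduces the quantitative estimate on moments to the same one performed in the isotropic case~\cite{F,AFS}.
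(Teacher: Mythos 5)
Your Step 1 — the reformulation of \eqref{final-goal} as the differential inclusion $Df\in T_1\cup T_2$, the observation that each $T_i$ is a rank-one-free two-plane of $\M$, and the identification of inter-phase rank-one connections — is correct and matches the paper's. The argument then breaks down at the staircase.

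You propose residuals with $|L_n|\sim K^{2n}$ and cumulative mass beyond scale $n$ of order $K^{-2n}$. This geometric scaling gives the wrong moments: the residual term alone contributes $K^{-2N}|L_{N+1}|^p\sim K^{2p}K^{2N(p-1)}$ to $\int|M|^p\,d\nu_N$, which diverges as $N\to\infty$ for every $p>1$. In particular $\int|M|^2\,d\nu_N\to\infty$, so the approximating maps cannot stay bounded in $H^1(\Om)$ and no weak limit $u\in H^1(\Om)$ can be extracted; your claim that the $p$-moments stay bounded for all $p<p_K$ is false with this scaling. The Faraco and Astala--Faraco--Sz\'ekelyhidi staircase that the paper reuses has \emph{arithmetic} growth, $|L_n|\sim n$ with masses $\sim n^{-p_K}$, so the tail contribution is $\sim n^{p-p_K}$, bounded precisely for $p\le p_K$ and logarithmically divergent at $p=p_K$. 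The ratio of consecutive residuals must therefore tend to $1$, not stay at the fixed constant $K^2$ hard-coded into your particular inter-phase connection; changing the splitting fractions $\lambda_n$ cannot repair this, since the factor-$K^2$ jump is built into the geometry of the connection you chose.

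You also do not address what the paper identifies as the genuinely new difficulty here: for $S_1\neq K$, $S_2\neq K^{-1}$ the quasiconvex hull of $T_1\cup T_2$ is no longer conformally invariant, so the \cite{AFS} trick of transporting the diagonal staircase to an arbitrary barycenter by conformal equivalence is unavailable. The paper's remedy is to restrict the target to $\tilde E=E\cap\{|a_{12}|<\delta a_{11}\}$ with an explicit $\delta=\delta(K,S_1,S_2)$ chosen in \eqref{defdelta}, work with the hull $\U$ of $\tilde E$, keep the staircase supported on $\partial\cs$ (diagonal matrices, where $S_1,S_2$ are invisible) exactly as in \cite{F,AFS}, and then prove in Lemma~\ref{lemma:barycenter} that every $A\in\U$ is rank-one connected to a diagonal matrix $Q\in\cs$; the $\delta$-restriction is exactly what makes the rank-one equation $(a_{11}-q_{11})(a_{22}-q_{22})=a_{12}a_{21}$ solvable with $Q\in\cs$. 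Your remark that the factor-$K^2$ connection ``decouples the geometry from $S_1,S_2$'' does not replace this step: the direction $w$ in $M-M'=w\otimes e_1$ depends on $S_1-S_2$, and nothing in your construction controls how far the laminate supports drift from the diagonal cone, which is precisely where the anisotropy must be handled.
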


The proof follows the strategy in \cite[Theorem 3.13]{AFS}, where 
the result is proved for $\sigma_1 = K I$, $\sigma_2 = K^{-1}  I$.
Here the main difference is that we work with coefficients that are not isotropic.
For the reader's convenience we shortly reproduce the arguments of 
\cite{AFS} 
pointing out the essential modifications.

{\bf Step 1} ({\it Reformulation of \eqref{final-goal} as a differential inclusion}). Recall that $u$ is solution to \eqref{final-goal} if and 
only if $u=f_1$ where $f=(f_1,f_2)$ is solution to the associated 
Beltrami equation. It is easily checked that, for $\sigma$ of the form 
\eqref{eq:critical-matrices}, the latter condition 
is equivalent to 

\begin{equation}\label{eq:inclusion}
Df \in E := E_1 \cup E_2 \,,
\end{equation}
where
$$
E_1 = \left\{
\begin{pmatrix}
T \\
J\sigma_1 T
\end{pmatrix}\,,
T \in\R^2
\right\}\,, \quad 
E_2 = \left\{
\begin{pmatrix}
T \\
J\sigma_2 T
\end{pmatrix}\,,
T \in\R^2
\right\}.
$$
The goal is to find a solution $f\in H^1(\Om;\R^2)$ to the differential inclusion \eqref{eq:inclusion} satisfying in addition the boundary condition 
$f_1(x)= x_1 $ on $\partial\Om$.

Next we define a setting where to apply the Baire category method. 
Fix $\delta > 0$  such that 
\begin{equation}\label{defdelta}
\delta < \left(\frac{ (1-1/K)(K-1)}{4  \max\{S_1,S_2\} K^2 }\right)^{\frac 12}, 
\end{equation}
and let
\begin{equation}\label{nuovocono}
\tilde E:=
E \cap 
 \left\{
\begin{pmatrix}
a_{11} & a_{12}  \\
a_{21} & a_{22}
\end{pmatrix}\in \M\,:
|a_{12}| < \delta a_{11}
\right\} .
\end{equation}
Notice that the introduction  of the small parameter $\delta$  enforces the solutions to have gradient pointing in a direction  relatively close to 
$I$. 
This property hides  the anisotropy of the coefficients $\sigma_i$, and allows us to follow the strategy of \cite{AFS}.  
\no
Define $\U$ as the interior of the quasiconvex hull of $\tilde E$
(defined as the set of 
range of weak limits in $L^{2}$ of solutions to 
\eqref{eq:inclusion}). 
The following characterization of $\overline \U$ holds
$$
\tilde E^{lc,1} = \overline \U = \tilde E^{pc}\,,
$$
where $\tilde E^{lc,1}$ and  $\tilde E^{pc}$ denote the first lamination hull and the 
polyconvex hull of $\tilde E$, respectively. We refer to \cite[Lemma 3.5]{AFS} for the proof of the identity above and for the notion of
first lamination hull and polyconvex hull. 

Set 
$$
X_0 = \{
f \in W^{1,\infty}(\overline\Om;\R^2) : f \text{ piecewise affine}, 
Df\in\U \text{ a.e.}, \  f|_{\partial\Om}= x  
\},
$$
 let $X$ be its closure in the weak topology of $H^1$,
and denote by $(X,w)$ the set $X$ endowed with the weak topology $w$ 
of $H^1$. 
Remark that $I\in \U$ and therefore 
the set $X$ is not empty as it contains the map $f(x) = x$.

%
%The next lemma characterizes the set $\overline \U$. 
%For its proof we refer the reader 
%to \cite[Lemma 3.5]{AFS}.

%\begin{lemma}\label{lemma:hulls}
%The following identity holds
%$$
%\tilde E^{lc,1} = \overline U = \tilde E^{pc}\,,
%$$
%where $\tilde E^{lc,1}$ and  $\tilde E^{pc}$ denote the first lamination hull and the 
%polyconvex hull of $\tilde E$, respectively.
%\end{lemma}

{\bf Step 2} ({\it Existence of solutions by the Baire category method}).  
The existence of  solutions to the differential inclusion is proved by an application of the Baire category method, and is based on the fact that the gradient operator  $D: X \mapsto  L^2(\Om;\M)$  is a Baire-1 mapping, i.e., the 
pointwise limit of continuous mappings. 
%From  Baire Theorem it follows that the points of continuity of 
%$D: (X,w)\to  L^2(\Om;\M)$, form a residual set in $(X,w)$ 
We refer to \cite[page 57]{kirc} and references therein for further clarifications 
on this subject. The existence result is stated in the next theorem. We refer to 
\cite[Lemma 3.7]{AFS} for its proof.

\begin{theorem}\label{norme-equivalenti}
The space $(X,w)$ is compact and metrizable. Each $f\in X$ satisfies $f\in \overline \U$ 
and  $f|_{\partial\Om}=x$. 
The metric $d$ on $X$ is equivalent to the metrics induced by the 
$L^2$ and $L^\infty$ norms. 
Moreover, the points of continuity of the map 
$\dsp  D: (X,w)\to L^2(\Om ;\M)$ form a residual set in $(X,w)$. Finally,
any point of continuity $f\in X$ of $D$  satisfies 
$Df \in E_1 \cup E_2$. 
\end{theorem}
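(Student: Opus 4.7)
\medskip

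\noindent\textbf{Proof proposal.}
The plan is to decompose the statement into four groups of claims and to treat them in sequence: (a) weak compactness and metrizability of $X$, (b) persistence of the boundary datum and of the constraint $Df\in\overline{\mathcal U}$ under weak $H^1$-limits, (c) equivalence of the three topologies on $X$, and (d) the Baire-1 argument that identifies the generic element of $X$ with a solution of the differential inclusion. Throughout, the crucial qualitative information is that $\mathcal U$ is bounded (being the interior of a compact hull of the bounded set $\tilde E$), so every $f\in X_0$ lies in a fixed ball of $W^{1,\infty}(\Om;\R^2)$ whose radius depends only on $\sup\{|A|\colon A\in\tilde E\}$ and the boundary datum $x$.

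First I would establish (a) and (c) together. The uniform $W^{1,\infty}$ bound on $X_0$ survives weak $H^1$-limits, so $X$ is a bounded subset of a separable Hilbert space; on such sets the weak $H^1$-topology is metrizable and compact by Banach--Alaoglu. Because the $W^{1,\infty}$ bound persists, Rellich's theorem upgrades weak $H^1$-convergence on $X$ to strong $L^2$-convergence, and Arzel\`a--Ascoli (combined with the fact that every $f\in X$ coincides with $x$ on $\partial\Om$, hence is uniformly bounded) upgrades it further to uniform $L^\infty$-convergence. This gives both the metrizability statement and the equivalence with the $L^2$ and $L^\infty$ metrics. For (b), the boundary datum passes to the weak limit because the trace operator $H^1(\Om;\R^2)\to L^2(\partial\Om;\R^2)$ is weakly continuous. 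The inclusion $Df\in\overline{\mathcal U}$ a.e.\ follows from the characterization $\overline{\mathcal U}=\tilde E^{pc}$: polyconvex functions are weakly lower semicontinuous on gradients (Morrey/Ball), so writing $\tilde E^{pc}$ as a sublevel set of polyconvex constraints preserves membership under weak $L^2$-convergence of $Df_n$.

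For (d), I would show that $D:(X,w)\to L^2(\Om;\M)$ is a Baire-1 mapping by approximating it with the continuous operators $D_\eps f := \rho_\eps\ast(Df)$, where $\rho_\eps$ is a standard mollifier (extending $Df$ by zero or by a fixed reflection near $\partial\Om$). Convolution with a smooth compactly supported kernel sends weakly $L^2$-convergent sequences to strongly $L^2$-convergent ones, so each $D_\eps$ is continuous from $(X,w)$ to $L^2$; on the other hand $D_\eps f\to Df$ in $L^2$ for every fixed $f\in X$. Since $(X,w)$ is a compact metric space, the classical theorem of Baire on pointwise limits of continuous maps yields a residual set $R\subset X$ of continuity points of $D$.

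The hardest and most interesting step is to verify the final claim: at every $f\in R$ one has $Df\in E_1\cup E_2$ a.e. I would argue by contradiction, assuming there exist $\eta>0$ and a measurable set $A\subset\Om$ of positive measure on which $\mathrm{dist}(Df(x),E)>\eta$. Using $\overline{\mathcal U}=\tilde E^{lc,1}$, i.e.\ the first-order lamination characterization, every such gradient admits a rank-one decomposition inside $\overline{\mathcal U}$ moving strictly towards $\tilde E$; combined with the boundedness of $\mathcal U$ this produces, for any $g\in X_0$ whose gradient is $\eta$-far from $E$ on a set of positive measure, a second element $\tilde g\in X_0$ obtained by inserting laminated rank-one oscillations on that set, arbitrarily close to $g$ in $L^\infty$ (hence in the weak $H^1$-topology) but with $\|D\tilde g-Dg\|_{L^2}$ bounded below by a universal positive constant $c(\eta)$. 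A density argument approximating $f\in X$ by such $g\in X_0$ then contradicts the continuity of $D$ at $f$ in the weak-to-strong sense. This oscillation construction is the technical core of the argument and the one requiring genuine care; modulo it, the theorem is assembled from standard functional-analytic ingredients.
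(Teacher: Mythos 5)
The paper does not prove this statement; it delegates it to \cite[Lemma~3.7]{AFS}. Your attempt, however, rests on a premise that is simply false and that would make the main theorem of the section vacuous, so I have to flag it as a genuine gap rather than a stylistic difference. You write that ``the crucial qualitative information is that $\mathcal U$ is bounded (being the interior of a compact hull of the bounded set $\tilde E$),'' and you build the compactness, the Arzel\`a--Ascoli step, and the $L^\infty$-metric equivalence on a resulting uniform $W^{1,\infty}$ bound on $X_0$. But $\tilde E$ is not bounded: each $E_i=\{(T,\,J\sigma_i T):T\in\R^2\}$ is a two-dimensional linear subspace of $\M$, and intersecting with the open cone $\{|a_{12}|<\delta a_{11}\}$ of \eqref{nuovocono} still leaves an unbounded cone. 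Consequently $\U=\operatorname{int}(\tilde E^{qc})$ is unbounded as well. This is not an incidental slip: if $\U$ were bounded then every $f\in X$ would lie in a fixed ball of $W^{1,\infty}$, and then $\int_B|Df|^{p_K}<\infty$ for every $f\in X$ and every ball, directly contradicting Theorem~\ref{thm-residual}. The unboundedness of $\U$ is precisely what the staircase laminates exploit.

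The correct mechanism is different and deeper. One does not have a pointwise bound on $Df$; instead, every $A\in\overline\U$ obeys a uniform distortion inequality $|A|^2\le C\det A$ (this holds on $E_1\cup E_2$ by direct computation, and passes to $\tilde E^{pc}=\overline\U$ because $\det$ is quasiaffine in the plane while $|\cdot|^2$ is convex). Combined with the null-Lagrangian identity $\int_\Om\det Df=|\Om|$ enforced by the boundary datum $f|_{\partial\Om}=x$, this yields a uniform $H^1$ bound on $X_0$; weak compactness and metrizability then follow from Banach--Alaoglu and separability. The $L^\infty$-metric equivalence is obtained not from a Lipschitz bound but from the uniform H\"older equicontinuity of $K$-quasiregular mappings with controlled $H^1$ norm (the distortion inequality makes every $f\in X_0$ quasiregular with a fixed constant). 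Your steps (b) and (d) are sound in outline — the trace argument, the mollification argument for the Baire-1 property, and the contradiction-via-laminated-oscillations argument at continuity points are all the right tools — but as written they sit on top of the false $W^{1,\infty}$ claim and would need the $H^1$/quasiregularity foundation just described to actually go through.
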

%Indeed, $D$ is the pointwise limit 
%in $L^2(\Om;\M)$
%of differential quotients. The latter are continuous as maps from 
%$L^2(\Om;\M)$ to $L^2(\Om;\M)$ and hence, 
%by Lemma \ref{norme-equivalenti}, as 
%maps from $(X,w)$ to $L^2(\Om;\M)$. 
\no
We deduce that the set of solutions to the differential inclusion 
\eqref{eq:inclusion} is residual in $(X,w)$.
%Combine Steps 3 and 4 to deduce the existence of a residual set of solutions 
%$f$ to  \eqref{eq:inclusion} such that 
%$\int_B |Df|^{p^*(K)} = +\infty $ for each ball $B\subset\Om$.  
The proof of Theorem \ref{thm:afsz-generalized} is then a consequence of Theorem  \ref{norme-equivalenti} and of the 
following theorem. 

\begin{theorem}\label{thm-residual}
The set
$$\dsp
\Big\{f\in X \, : \, \int_B |Df|^{p_K} = +\infty 
\text{ for all balls }B\subset \Om   
\Big\}
$$
is residual in $X$. 
\end{theorem}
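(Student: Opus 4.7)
The argument is a Baire category one. Choose a countable family $\{B_k\}_{k\in\N}$ of balls compactly contained in $\Om$ such that every open ball in $\Om$ contains some $B_k$. Letting
$$
V_{n,k} := \Big\{ f \in X \, : \, \int_{B_k} |Df|^{p_K}\,dx > n \Big\}, \qquad n,k \in \N,
$$
the set in the statement equals $\bigcap_{n,k} V_{n,k}$. Openness of each $V_{n,k}$ in $(X,w)$ follows from the weak lower semicontinuity of $f\mapsto \int_{B_k}|Df|^{p_K}\,dx$ together with Theorem~\ref{norme-equivalenti}, which ensures $Df_j \weak Df$ in $L^2$ whenever $f_j \to f$ in $(X,w)$. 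It remains to prove that each $V_{n,k}$ is dense.

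Fix $f \in X$ and a $w$-neighborhood $\ns$ of $f$. By Theorem~\ref{norme-equivalenti} we may assume $f\in X_0$, so $f$ is piecewise affine with $Df\in\U$ a.e.\ and the topology $w$ is controlled on $X$ by the $L^\infty$ norm. Since $B_k$ is covered by finitely many of the affine pieces of $f$, it is enough to prove the following local statement: given any affine piece $P$ with $Df\equiv M \in \U$ and given arbitrary $\eta, L>0$, there exists a piecewise affine $\varphi:P\to\R^2$ with $\varphi = Mx$ on $\partial P$, $D\varphi \in \U$ a.e., $\|\varphi - Mx\|_{L^\infty(P)}\le \eta$, and $\int_P |D\varphi|^{p_K}\,dx \ge L$. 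Replacing $f$ by $\varphi$ on those pieces of $f$ that meet $B_k$ and leaving $f$ unchanged elsewhere then yields $\tilde f \in X_0 \cap \ns$ with $\int_{B_k}|D\tilde f|^{p_K}\,dx \ge n$, as required.

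The construction of $\varphi$ is based on staircase laminates, following the scheme of \cite[Sect.~4]{AFS}. For each $M\in\U$ one builds a finite-order laminate $\nu_N \in \caL(\M)$ with barycenter $M$ and $\mathrm{supp}\,\nu_N \subset \overline{\U}=\tilde E^{pc}$, whose tail satisfies
$$
\nu_N\bigl(\{A \in \M : |A|>t\}\bigr) \asymp C(M)\, t^{-p_K}, \qquad t\in[1,t_N],
$$
with $t_N \to \infty$ as $N\to\infty$, so that $\int|A|^{p_K}\,d\nu_N \to \infty$. The laminate is produced by iteratively splitting a residual matrix $M_j \in \U$ along a rank-one line into $M_{j+1}\in\U$ (retained for further splitting) and $N_{j+1}\in\tilde E$ of much larger norm (added to the final support), with splitting weights tuned so that Astala's threshold exponent $p_K$ governs the tail. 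The main technical obstacle, and the principal departure from \cite{AFS}, is to carry out this cascade inside $\tilde E$ for the anisotropic matrices $\sigma_1 = \diag(K,S_1)$, $\sigma_2 = \diag(K^{-1},S_2)$ rather than for the isotropic $KI, K^{-1}I$: this is precisely the role of the cone constraint $|a_{12}| < \delta a_{11}$ in \eqref{nuovocono}, with $\delta$ chosen as in \eqref{defdelta}, which keeps every matrix of the cascade in a thin cone around the identity direction and forces the splitting steps to behave quantitatively as in the isotropic case, with $S_1,S_2$ absorbed into $C(M)$. Once $\nu_N$ is available with $\int|A|^{p_K}\,d\nu_N \ge 2L/|P|$, the standard laminate realization theorem (cf.~\cite{kirc}) produces $\varphi$ with gradient distribution arbitrarily close to $\nu_N$ in $\tilde E$ and arbitrarily small $L^\infty$ deviation from $Mx$, concluding the density argument and hence the proof.
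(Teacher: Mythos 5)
Your framework is the right one and matches the paper's: write the set as a countable intersection of sets $V_{n,k}$ that are open in $(X,w)$ by weak lower semicontinuity of $\int_{B_k}|Df|^{p_K}$, reduce density to a local construction on affine pieces of $f\in X_0$, and drive the $L^{p_K}$ norm up via staircase laminates followed by Kirchheim-type realization. This is exactly the route of \cite[Cor.~3.12]{AFS}, which the paper explicitly invokes, and the paper says the proof is ``identical except for the proof of a key ingredient (namely, \cite[Proposition 3.10]{AFS})''. That key ingredient is what becomes Lemma~\ref{lemma:barycenter}, and this is where your account has a genuine gap.

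You assert that the role of the cone constraint $|a_{12}|<\delta a_{11}$ with $\delta$ as in \eqref{defdelta} is to ``keep every matrix of the cascade in a thin cone around the identity direction and force the splitting steps to behave quantitatively as in the isotropic case, with $S_1,S_2$ absorbed into $C(M)$''. This does not describe what actually has to be proved, and as stated it is not how the mechanism works: the staircase laminates have support on $\partial\cs$, where $\cs$ is the \emph{diagonal} cone $\{t(a,a/K)+(1-t)(a,Ka)\}$, which is not a thin cone near the identity direction, and the staircase itself is run only on diagonal matrices $Q\in\cs$, where it is a minor modification of the isotropic construction and does not need $\delta$ at all. The genuinely new step, replacing the conformal-invariance argument that \cite{AFS} used to pass from diagonal matrices to all of $\U$, is to show that a general $A\in\U$ is rank-one connected to a diagonal matrix $Q\in\cs$ so that $A=\tau P+(1-\tau)Q$ with the segment inside $\U$; the laminate for $A$ is then $\tau\delta_P+(1-\tau)\nu_n$ with $\nu_n$ the staircase at $Q$. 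The purpose of \eqref{defdelta} is precisely to guarantee this rank-one connection: it bounds $|a_{12}a_{21}|$ so that the signed-area equation $h(A_d,Q_d)=a_{12}a_{21}$ admits a solution $Q_d\in\cs$, via the inequality \eqref{finade}. Your proposal neither constructs the cascade for a nondiagonal $A\in\U$ nor supplies the substitute for conformal invariance, so the density step is not actually established.
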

\no
Theorem \ref{thm-residual} is proved following the same strategy of the proof of  Corollary 3.12 in \cite{AFS}. We recall that in \cite{AFS} the isotropic case $S_1 = K$, $S_2= 1/K$ is considered. In the present setting the proof is identical except for the proof of a key ingredient (namely, \cite[Proposition 3.10]{AFS}). Therefore, we only state and proof such  result in Lemma 
\ref{lemma:barycenter} below.  
 For this purpose it is convenient to  introduce some notation.
Given a matrix $A= (a_{ij}) \in\M$, we denote by $A_d$ and 
$A_a$ its diagonal and anti-diagonal part, namely
\begin{equation}\label{diag-antidiag}
A_d :=
\left(
\begin{array}{cc}
a_{11} & 0 \\
0  &   a_{22}
\end{array}
\right)  \,, 
\quad
A_a :=
\left(
\begin{array}{ll}
0 & a_{12} \\
a_{21} & 0
\end{array}
\right) \,.
\end{equation}
Moreover we will identify $A_d$ and $A_a$ with points 
of $\R^2$: 
$A_d = (a_{11}, a_{22})$, 
$A_a =(a_{12},a_{21})$.
Finally, we denote by $\cs$ the following cone of $\R^2$. 
\begin{equation}
\label{cono1}
%\begin{aligned}
\cs= \{ t(a,a/K) + (1-t)(a,Ka): t\in(0,1), a\in\R^+ \}.
%& \cs_2= \{ t(a,-S_1 a) + (1-t)(a,-S_2 a): t\in(0,1), a\in\R \}\,.
%\end{aligned}
\end{equation}

\begin{lemma}\label{lemma:barycenter}
Every $A\in \U$ is the barycenter of a sequence of laminates 
of finite order
$\nu_n\in \caL$ 
such that {\rm supp }$\nu_n\subset\U$ and 
\begin{equation}\label{staircase-laminates}
\lim_{n\to \infty} \int_{\M}|M|^{p_K} d\nu_n(M) = \infty \,.
\end{equation}
\end{lemma}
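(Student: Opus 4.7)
The plan is to construct, for each $A\in\U$, a sequence of finite-order laminates $\nu_n$ with $\overline{\nu_n}=A$, supported in $\U$, whose $p_K$-moment diverges as $n\to\infty$, by adapting the staircase laminate construction of \cite[Proposition 3.10]{AFS} to the anisotropic phases \eqref{eq:critical-matrices}.

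First, I would work in the diagonal plane, identifying diagonal matrices with points $(x,y)\in\R^2$. The diagonal elements of $E_1$ and $E_2$ are exactly the points on the two boundary rays $y=Kx$ and $y=x/K$ of the cone $\cs$, and two such diagonal matrices, one from each phase, are rank-one connected precisely when they share a coordinate. For a general $A\in\U$, a preliminary rank-one splitting reduces to the case where $A=A_0$ is diagonal and lies in the interior of $\cs$, using that $\overline{\U}=\tilde E^{lc,1}$.

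Next I would iterate: at each step $k$, split $A_k = t_k P_k + (1-t_k)\,A_{k+1}$, where $P_k$ is a diagonal element of $E_1\cup E_2$ lying on the boundary of $\cs$ and $A_{k+1}\in\cs$ is diagonal with $|A_{k+1}|\geq\gamma|A_k|$ for some fixed $\gamma>1$. After $n$ iterations, $\nu_n$ is the laminate supported on $\{P_1,\ldots,P_n,A_n\}$, with residual mass $(1-t)^n$ at $A_n$ and $|A_n|\sim\gamma^n|A_0|$. Tuning $t$ and $\gamma$ so that $(1-t)\gamma^{p_K}>1$, one obtains
$$
\int_{\M}|M|^{p_K}\,d\nu_n(M)\geq (1-t)^n|A_n|^{p_K}\longrightarrow\infty,
$$
which is achievable exactly at the critical Astala exponent $p_K=2K/(K-1)$.

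The main obstacle specific to the anisotropic setting will be ensuring at each step that $P_k\in\tilde E$ (not merely in $E$) and $A_{k+1}\in\U$, i.e., that they satisfy the anti-diagonal cone condition in \eqref{nuovocono}. Since the staircase above stays on diagonal matrices, the condition $|(P_k)_{12}|<\delta (P_k)_{11}$ is trivial for the phase matrices themselves, and the delicate point is the initial reduction from a general $A\in\U$ to a diagonal $A_0\in\cs$, whose rank-one direction must be chosen so that both endpoints remain within the admissibility cone of $\tilde E$. This is precisely where the quantitative bound \eqref{defdelta} on $\delta$, depending on $K$, $S_1$, $S_2$, enters. Once admissibility is verified, the remaining bookkeeping for the $p_K$-moment follows \cite[Proof of Proposition 3.10]{AFS} verbatim.
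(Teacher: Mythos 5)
Your strategy is the same as the paper's: reduce a general $A\in\U$ to a diagonal matrix in $\cs$ by an initial rank-one splitting, then run a (slightly shifted) staircase laminate supported on or near diagonal matrices, where the anti-diagonal cone constraint of \eqref{nuovocono} is harmless. You also correctly flag that the $\delta$-smallness in \eqref{defdelta} is what makes the initial reduction go through, and that this is the only step where the argument departs from \cite{AFS}.

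But that step is exactly where the proof has to do new work, and you do not carry it out. What must be shown is that for any $A=(a_{ij})\in\U$ there exists a diagonal $Q$ with $Q_d\in\cs$ and ${\rm rank}(A-Q)=1$, i.e., a solution $Q_d$ of $(a_{11}-q_{11})(a_{22}-q_{22})=a_{12}a_{21}$. The paper reads the left-hand side as a signed rectangle area $h(A_d,Q_d)$, observes that as $Q_d$ ranges over $\overline\cs$ this function sweeps exactly a half-line $[m,\infty)$ with $m<0$, computes the worst case $m$ over $a_{22}$ (attained at $a_{22}=a_{11}$, giving $m=-\tfrac{1}{4}a_{11}^2(1-1/K)(K-1)$), and then uses $|a_{12}|<\delta a_{11}$ plus \eqref{defdelta} to guarantee $a_{12}a_{21}>m$. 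Without this chain your sentence ``this is precisely where the quantitative bound on $\delta$ enters'' is an assertion, not an argument. Note also that your description of the obstruction is slightly off: the issue is not that $Q$ might fail the anti-diagonal cone condition (any diagonal $Q$ trivially satisfies $|q_{12}|<\delta q_{11}$), nor that ``both endpoints'' might leave an admissibility cone, but rather that when $a_{12}a_{21}$ is negative and too large in modulus the rank-one equation may have no solution $Q_d$ in $\cs$ at all --- this is precisely what \eqref{defdelta} rules out. Supplying that computation would close the gap; the rest of your outline matches the paper.
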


\begin{proof}

The proof of Lemma \ref{lemma:barycenter} follows the strategy of the proof of
\cite[Proposition 3.10]{AFS}, where the particular case of $S_1 = K$ 
and $S_2=1/K$ is considered.  
In \cite{AFS} it is first showed that the identity matrix is the barycenter of 
a sequence of laminates of finite order 
satisfying  \eqref{staircase-laminates} and with 
support on $\partial \cs$, 
where $\cs$ is the cone defined by \eqref{cono1}. 
The proof is based on the construction of the so-called staircase laminates, which 
was originally made in \cite{F}.
Then, they extend the result to all other matrices by using the conformal invariance of the quasiconvex hull. In our case $\U$ does not enjoy conformal invariance, due to the anisotropy of the coefficients $\sigma_i$. 
Therefore, we have to proceed in a different way.

By slightly modifying the staircase construction in \cite{F}, \cite{AFS} 
(in fact only a finite number of steps at the beginning of the staircase) one can easily show that each point in 
$\cs$ can be obtained as the barycenter of 
a sequence of laminates of finite order, 
satisfying  \eqref{staircase-laminates} and with 
support on $\partial \cs$. Moreover, by a suitable shift of the support, one 
can obtain that these measures have support in the interior of the cone $\cs$.

Now let $A=(a_{ij})\in \U$. 
We claim that $A$ is rank-one connected to a diagonal matrix $Q=(q_{ij})= Q_d \in \cs$ and  we conclude the proof. 
Arguing as in \cite[Remark 3.6]{AFS} it is easy to show that $Q\in \U$ (that is to say, $Q$ belongs to the interior of the quasiconvex hull), and that
 $A$ belongs to a suitable segment $[P,Q]$ still contained in $\U$, i.e.,  
$A=\tau P + (1-\tau) Q$ for some $\tau\in (0,1)$.
Since $Q\in \cs$, $Q$ is the barycenter of a sequence of laminates 
$\nu_n = \sum \lambda_j \delta_{A_j}$ 
supported in $\U$ and satisfying  \eqref{staircase-laminates}.
The required laminates can then be defined as 
$$
\tilde\nu_n=
\tau \delta_P + (1-\tau) \sum \lambda_j \delta_{A_j } \,.
$$ 
 We conclude by proving the claim. 
From \eqref{nuovocono} it follows that  $A_d\in \cs$. 
%and $A_a\in\cs_2$. 
The condition of rank-one connectedness reads as 
\begin{equation}
(a_{11}-q_{11})(a_{22}-q_{22}) = a_{21}a_{12} .
\end{equation}

\begin{figure}
\centering
\psfrag{c1}[B1][B1][0.9][0]{$\cs$}
\psfrag{c2}[B1][B1][0.9][0]{$\cs_2$}
\psfrag{ad}[B1][B1][0.9][0]{$A_d$}
\psfrag{aa}[B1][B1][0.9][0]{$A_a$}
\psfrag{qd}[B1][B1][0.9][0]{$Q_d$}
\psfrag{qa}[B1][B1][0.9][0]{$Q_a$}
\psfrag{q11}[B1][B1][0.9][0]{$q_{11}$}
\psfrag{q22}[B1][B1][0.9][0]{$q_{22}$}
\psfrag{a11}[B1][B1][0.9][0]{$a_{11}$}
\psfrag{a22}[B1][B1][0.9][0]{$a_{22}$}
\psfrag{a12}[B1][B1][0.9][0]{$a_{12}$}
\psfrag{a21}[B1][B1][0.9][0]{$a_{21}$}
\psfrag{k1}[B1][B1][0.9][0]{$(a,a/K)$}
\psfrag{k2}[B1][B1][0.9][0]{$(a,Ka)$}
\psfrag{s1}[B1][B1][0.9][0]{$(a,-S_1 a)$}
\psfrag{s2}[B1][B1][0.9][0]{$(a,-S_2 a)$}
\includegraphics[scale=0.9]{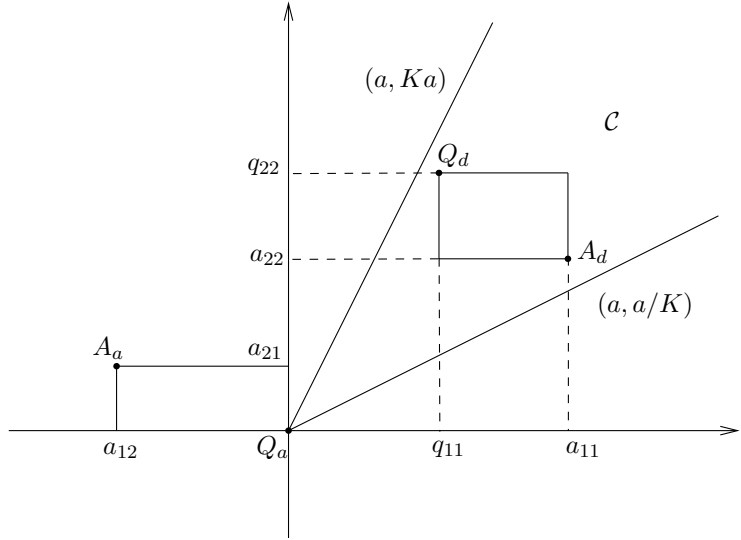}
\caption{The rank-one connected points $A$ and $Q$.}
\label{due-coni}
\end{figure}

\no
This is equivalent to the fact that  the two rectangles 
with sides parallel to the axis and 
diagonal
$Q_d A_d$ and $Q_a A_a$  
have the same signed area (see Figure \ref{due-coni}). 
Notice that the sign of the areas is given by the sign of the slope of 
$Q_d A_d$ and of $Q_a A_a$, respectively.  
Define $h(A_d,Q_d)$ as the signed area of the corresponding rectangle and remark 
that it is a continuous function. Given $A$, the problem is to find $Q_d$ 
such that 
\begin{equation}\label{harea}
h(A_d,Q_d)=a_{21}a_{12}.
\end{equation}
Notice that 
$$
\{ h(A_d,Q_d), Q_d \in \overline\cs\} = [m,\infty) 
$$
for a suitable negative $m<0$ depending on $A_d$.
Therefore, If $a_{21}a_{12}>0$ we can  always solve \eqref{harea}. 
Assume instead that $a_{21}a_{12}<0$ like in Figure \ref{due-coni}. 
Let $\tilde h(A_d)$ be the infimum of $h$ over $Q_d$. 
For a fixed $a_{11}$,  it is easy to see that $\tilde h$ attains its maximum for 
$a_{22} = a_{11}$. In this case, the optimal $Q_d$ is given by
$$
Q_d = \frac 12 \left(  a_{11} + \frac{a_{11}}{K} ,    K a_{11} + a_{11} \right),
$$
and 
$$
\max_{a_{22}} \tilde h(A_d) = - \frac{a_{11}^2}{4}(1-1/K)(K-1).
$$
Therefore \eqref{harea} has a solution whenever 
\begin{equation}\label{finade}
- \frac{a_{11}^2}{4}(1-1/K)(K-1) < a_{21}a_{12}.
\end{equation}
From \eqref{nuovocono} it follows that
$$
|a_{21}| < \max\{S_1,S_2\} K  \delta  a_{22} ,
$$
and hence
$$
|a_{12}a_{21}| < \max\{S_1,S_2\} K  \delta  a_{22} |a_{12}|<
\max\{S_1,S_2\} K  \delta^2  a_{11}a_{22} <
\max\{S_1,S_2\} K  \delta^2 K a_{11}^2 .
$$
By the very definition  \eqref{defdelta} of $\delta$ we deduce that 
$$
\frac{a_{11}^2}{4}(1-1/K)(K-1)> \max\{S_1,S_2\} K^2  \delta^2  a_{11}^2,
$$
so that \eqref{finade} holds, and the proof is completed.

\end{proof} 
 
%The proof of Theorem \ref{thm:afsz-generalized} is then completed.

\section{Two phase Beltrami coefficients}

\no
In the present section we focus on  two-phase Beltrami coefficients. In this class, we find the ellipticity $K^{min}$ defined in \eqref{teta} and we characterize the  Beltrami coefficients for which $K=K^{min}$. 
From now on, to easy notation, we will omit the dependence on $G$ and $H$ in the ellipticity constants.

%For any such $\{G_1,\, G_2, \, H_1,\, H_2 \}$, we will find $G:\Om\to\{G_1,\, G_2\}$, and 
%$H:\Om\to\{H_1,\, H_2\}$, and a 
%solution $f$ to the associated Beltrami equation \eqref{beltramiGH} such that 
%$\dsp \int_\Om |\nabla f|^{p^{*}(K)}dx = \infty$.

\subsection{Two-phase Beltrami equation}
Let $E_1$ be a measurable subset of $\Om$ and let $E_2:= \Om \setminus E_1$. 
Fix  $\{G_1,\, G_2, \, H_1,\, H_2 \}\subset\Msdet$ positive definite (symmetric and with determinant one), and consider the functions 
\begin{equation}\label{2fasi}
G:= \chi_{_{E_1}} G_1+ \chi_{_{E_2}} G_2 , \qquad H:= \chi_{_{E_1}}H_1  + \chi_{_{E_2}}  H_2,
\end{equation}
where $\chi_{_{E_1}}$ and $\chi_{_{E_2}}$ are the characteristic functions of $E_1$ 
and $E_2$, respectively.
From \eqref{formula-K} it follows that for $G$ and $H$ of the form \eqref{2fasi}, one has
$$
K = \max\{|g h| \res E_1, \, |g h| \res E_2\}= \max\{g_1 h_1\, , \,  g_2 h_2\}.
$$
where $g_i$ and $h_i$ denote the largest eigenvalue in  $E_i$ of $G$ and $H$, respectively. 
Set
\begin{equation*}
\hat{K}:=\sqrt{g_1h_1g_2h_2} \,.
\end{equation*}

\begin{lemma}\label{primadisug}
The following inequality holds
\begin{equation*}
K^{min}\leq \hat{K} \le K \,.
\end{equation*}
\end{lemma}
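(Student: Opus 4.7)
The strategy is to prove the two inequalities separately. The right one, $\hat K \le K$, is immediate from the elementary inequality $\sqrt{xy}\le\max\{x,y\}$ applied to $x=g_1 h_1$ and $y=g_2 h_2$. For the nontrivial inequality $K^{min}\le\hat K$, the plan is to exhibit explicit $A,B\in SL(2)$ attaining the bound, using the matrix geometric mean. Set $M_G:=G_1^{1/2}(G_1^{-1/2} G_2 G_1^{-1/2})^{1/2} G_1^{1/2}$ and, analogously, $M_H$; both are positive definite elements of $\Msdet$. Take $B:=M_G^{-1/2}$ and $A:=M_H^{-1/2}$ (principal square roots), which again belong to $\Msim\cap SL(2)$.

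The key algebraic step is the classical identity $M_G G_1^{-1} M_G = G_2$ characterising the geometric mean, which one checks by a direct computation with $P:=G_1^{-1/2} G_2 G_1^{-1/2}$. It gives
\begin{equation*}
\tilde G_1 := B^T G_1 B = M_G^{1/2} G_2^{-1} M_G^{1/2} = (M_G^{-1/2} G_2 M_G^{-1/2})^{-1} = \tilde G_2^{-1},
\end{equation*}
so $\tilde G_1$ and $\tilde G_2$ are mutual inverses in $\Msdet$ and therefore have the same largest eigenvalue, call it $\alpha$. A short similarity argument (noting that $M_G^{-1} G_2$ is conjugate to $P^{1/2}$) identifies $\alpha=\sqrt{\lambda_{\max}(G_1^{-1} G_2)}$, and submultiplicativity of the spectral norm yields
\begin{equation*}
\lambda_{\max}(G_1^{-1} G_2) \le \lambda_{\max}(G_1^{-1})\,\lambda_{\max}(G_2) = g_1 g_2,
\end{equation*}
since $\lambda_{\max}(G_1^{-1})=1/\lambda_{\min}(G_1)=g_1$ because $G_1\in\Msdet$. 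Hence $\alpha\le\sqrt{g_1 g_2}$ and, by the same argument with $A$ in place of $B$ and the $H_i$'s in place of the $G_i$'s, $\beta:=\tilde h_1=\tilde h_2\le\sqrt{h_1 h_2}$. Inserting this pair $(A,B)$ into \eqref{teta},
\begin{equation*}
K^{min} \le \max\{\tilde g_1\tilde h_1,\,\tilde g_2\tilde h_2\} = \alpha\beta \le \sqrt{g_1 g_2 h_1 h_2} = \hat K.
\end{equation*}

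The main subtlety is identifying the right $B$: a priori, minimising $\max\{\tilde g_1,\tilde g_2\}$ over $SL(2)$ looks like a genuine optimisation problem on the hyperbolic plane. Indeed the set of positive definite elements of $\Msdet$ can be viewed as a model of $\mathbb H^2$, on which $SL(2)$ acts by isometries via $G\mapsto B^T G B$; the optimal $B$ sends the midpoint of the geodesic $[G_1,G_2]$ to $I$, yielding $\alpha=e^{d(G_1,G_2)/2}\le\sqrt{g_1 g_2}$ by the triangle inequality. The geometric-mean formula writes this extremizer down algebraically and reduces everything to routine spectral bookkeeping.
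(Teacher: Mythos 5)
Your proof is correct, but it takes a genuinely different route from the paper's. The paper argues by hand: it assumes WLOG $g_1 h_1 \ge g_2 h_2$, sets $\lambda := \sqrt{g_2 h_2 / (g_1 h_1)}$, diagonalizes whichever of $G_1$ or $H_1$ carries the overall largest eigenvalue, and then applies the diagonal shear $\diag(\sqrt\lambda, 1/\sqrt\lambda)$ on that side only, with the identity on the other. This scales the first-phase product exactly down to $\hat K$ while the second-phase product is estimated above by $\hat K$; a case distinction is needed to guarantee the shear does not swap which eigenvector of the first matrix carries the top eigenvalue. Your construction avoids both the WLOG and the case split by normalizing at the matrix geometric mean: the identity $M_G G_1^{-1} M_G = G_2$ (which you verify correctly) forces $\tilde G_1 = \tilde G_2^{-1}$, hence $\tilde g_1 = \tilde g_2 =: \alpha$, and likewise for $H$, so the two phases become symmetric and the estimate $\alpha^2 = \lambda_{\max}(G_1^{-1}G_2) \le g_1 g_2$ finishes the job at once. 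The spectral step is sound: $G_1^{-1}G_2$ is similar to the positive definite matrix $G_1^{-1/2}G_2G_1^{-1/2}$, so its largest eigenvalue equals its spectral radius, which is dominated by $\|G_1^{-1}\|\,\|G_2\| = g_1 g_2$ since $G_i\in\Msdet$. A pleasant by-product of your choice of $(A,B)$, visible once Proposition \ref{Kmin} is in place, is that it actually attains the minimum in \eqref{teta}: one gets $\alpha\beta = K^{min}$ exactly, not merely $\alpha\beta \le \hat K$. The closing hyperbolic-geometry remark is not needed for the proof but correctly explains why the geodesic midpoint is the natural normalization.
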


\begin{proof}
The  inequality $\hat{K} \le K$ is trivial. 
Let us prove that $K^{min}\leq \hat{K}$. 
Without loss of generality we may assume that $g_1h_1\geq g_2h_2$. 
Set 
$$
\dsp\lambda:=\sqrt{\frac{g_2h_2}{g_1h_1}}\le 1.
$$ 
We can have either of the following cases: $h_1<\max\{g_1,g_2,h_1,h_2\}$ or $h_1=\max\{g_1,g_2,h_1,h_2\}$.
Suppose we are in the first case.  
Up to a diagonalization, $G_1$ is of the form
$$
G_1=\left(
\begin{array}{ll}
g_1 & 0\\
0     & \frac{1}{g_1}
\end{array}
\right)\,.
$$
We want to use the change of variables \eqref{trasformatedet}, and we recall that $g(A,B)$ and $h(A,B)$ denote the maximum eigenvalue of $\tilde G$ and $\tilde H$, respectively.
We choose 
$$\dsp B=\left(
\begin{array}{ll}
\sqrt{\lambda} & 0\\
0     & \frac{1}{\sqrt{\lambda}}
\end{array}
\right),
$$
 and  $A=I$.
Then $\dsp g_1(A,B)=\lambda g_1$ and $\dsp g_2(A,B)\leq \frac{1}{\lambda}g_2$. 
Therefore 
$$
g_1(A,B)h_1(A,B) =\hat{K} \quad\text{and }\quad g_2(A,B)h_2(A,B) \leq\hat{K}.
$$  
We deduce
$$
K^{min}\leq g_1(A,B)h_1(A,B)=\hat{K}\,.
$$
Suppose now that $h_1=\max\{g_1,g_2,h_1,h_2\}$. 
Then, after diagonalization of $H_1$, we  choose $B=I$ and $A=\left(
\begin{array}{ll}
\lambda & 0\\
0     & \frac{1}{\lambda}
\end{array}
\right)$, and we proceed as before. 
\end{proof}

\begin{remark} \label{rigidity0}
{\rm
A direct  consequence of Lemma \ref{primadisug} is that 
$
K^{min} < K
$
whenever  $g_1h_1<g_2h_2$.
}
\end{remark}

%Diamo un risultato piu'  forte nel seguente teorema.

\begin{proposition}\label{Kmin}
The following formula for $K^{min}$ holds:
\begin{equation}\label{formulakmin}
K^{min}(G,H)=\sqrt{g_2(G_1^{-1/2},H_1^{-1/2})h_2(G_1^{-1/2},H_1^{-1/2})}\,.
\end{equation}
\end{proposition}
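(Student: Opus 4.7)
The plan is to introduce the abbreviations $s_G:=g_2(G_1^{-1/2},H_1^{-1/2})$ and $s_H:=h_2(G_1^{-1/2},H_1^{-1/2})$, so the right-hand side of the desired identity is $\sqrt{s_G s_H}$, and then to establish the two matching inequalities $K^{min}\le\sqrt{s_G s_H}$ and $K^{min}\ge\sqrt{s_G s_H}$. By the change-of-variables formula \eqref{trasformatedet}, $s_G$ is the largest eigenvalue of $G_1^{-1/2} G_2 G_1^{-1/2}$ and $s_H$ the largest eigenvalue of $H_1^{-1/2}H_2H_1^{-1/2}$. Note that $G_1^{-1/2}, H_1^{-1/2}\in SL(2)$ because $\det G_1=\det H_1=1$, so these are admissible transformations, and $K^{min}$ is invariant under any such admissible change of variables.

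For the upper bound I would apply \eqref{trasformatedet} with $B = G_1^{-1/2}$ and $A = H_1^{-1/2}$. This normalization yields $\tilde G_1 = \tilde H_1 = I$, hence $\tilde g_1 = \tilde h_1 = 1$, while $\tilde g_2 = s_G$ and $\tilde h_2 = s_H$. Applying Lemma~\ref{primadisug} to this transformed two-phase problem gives at once
\[
K^{min}\le\sqrt{\tilde g_1\tilde h_1\tilde g_2\tilde h_2}=\sqrt{s_G s_H}.
\]

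The heart of the proof is the matching lower bound: for \emph{every} $A,B\in SL(2)$ one must show $\max\{\tilde g_1\tilde h_1,\tilde g_2\tilde h_2\}\ge\sqrt{s_G s_H}$. Since $\max\{x,y\}^2\ge xy$, it is enough to prove the two factorized inequalities $\tilde g_1\tilde g_2\ge s_G$ and $\tilde h_1\tilde h_2\ge s_H$. The key observation is that $(B^t G_1 B)^{-1}(B^t G_2 B)=B^{-1}G_1^{-1}G_2\, B$ is similar to $G_1^{-1}G_2$ and therefore has spectral radius $s_G$. Combining ``spectral radius $\le$ operator norm'' with submultiplicativity,
\[
s_G \;\le\; \bigl\|(B^t G_1 B)^{-1}(B^t G_2 B)\bigr\|\;\le\;\bigl\|(B^t G_1 B)^{-1}\bigr\|\,\bigl\|B^t G_2 B\bigr\|\;=\;\tilde g_1\,\tilde g_2,
\]
where I used that a symmetric positive-definite matrix of determinant one and its inverse both have operator norm equal to their largest eigenvalue. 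The analogous argument with $A$ and the $H_i$ yields $\tilde h_1\tilde h_2\ge s_H$, and multiplying gives $(\tilde g_1\tilde h_1)(\tilde g_2\tilde h_2)\ge s_G s_H$, as desired.

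The main conceptual obstacle is identifying the correct $SL(2)$-invariant on which to base the lower bound: neither $\tilde g_1$ nor $\tilde g_2$ alone is controlled, but their product is bounded below by the intrinsic invariant $s_G$, exploiting the similarity invariance of $G_1^{-1}G_2$. Once this is recognized, the matching upper bound follows essentially for free from the explicit normalizing choice $B=G_1^{-1/2}$, $A=H_1^{-1/2}$ and Lemma~\ref{primadisug}, so the two inequalities meet and yield the claimed formula.
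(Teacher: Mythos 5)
Your proof is correct and follows essentially the same route as the paper: the upper bound by normalizing with $B=G_1^{-1/2}$, $A=H_1^{-1/2}$ and invoking Lemma~\ref{primadisug} (together with the invariance of $K^{min}$ under these transformations), and the lower bound by reducing, via $\max\{x,y\}\ge\sqrt{xy}$, to the factorized inequalities $\tilde g_1\tilde g_2\ge s_G$ and $\tilde h_1\tilde h_2\ge s_H$, which are precisely \eqref{g1g2min}--\eqref{h1h2min}. The only inessential difference is that you prove these via similarity invariance of the spectral radius of $G_1^{-1}G_2$ and submultiplicativity of the operator norm, while the paper obtains the same estimate by a direct Rayleigh-quotient argument after normalizing $G_1=H_1=I$.
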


\begin{proof}

In view of Lemma \ref{primadisug}, it is enough to prove  that for each 
$A,B\in SL(2)$ we have 
\begin{equation}\label{step}
g_2(G_1^{-1/2},H_1^{-1/2})h_2(G_1^{-1/2},H_1^{-1/2})
\leq g_1(A,B)h_1(A,B)g_2(A,B)h_2(A,B)\,.
\end{equation}
For this purpose, we show that if $G_1=H_1=Id$, then for each 
$A,B\in SL(2)$
\begin{align}
\label{g1g2min} &\dsp  g_2 \leq g_1(A,B)g_2(A,B)\,,\\
\label{h1h2min} &\dsp  h_2 \leq h_1(A,B)h_2(A,B)\,.
\end{align}
Let $B\in SL(2)$ and set 
$$
\tilde G_1:=B^T B \,,\quad \tilde G_2:=B^TG_2 B \,.
$$
%Define $\tilde g_1$ and $\tilde g_2$ as the largest eigenvalue of $\tilde G_1$ and $\tilde G_2$ respectively.
For every $v\in\R^2$ we have 
$$
\frac{1}{g_1(A,B)}\|v\|^2 \leq \langle \tilde G_1 v,v\rangle = \|Bv\|^2,
%\leq g_1(A,B)\|v\|^2
$$
and hence
$$
g_2(A,B)=\sup_{\|v\|\le 1}\langle \tilde G_2 v,v\rangle=\sup_{\|v\|\le 1}\langle G_2 Bv,Bv\rangle
\geq \frac{g_2}{g_1(A,B)}\,,
$$
which proves \eqref{g1g2min}. The proof of \eqref{h1h2min} is fully analogous.

%\no
%Therefore
%$$
%\tilde g_2\tilde g_1\geq g_2 \,.
%$$

\end{proof}

\subsection{Gradient integrability and critical coefficients}

In the next proposition we will show that  if the bound $K^{min}\le K$ is achieved, then $G_i$ and $H_i$ can be simultaneously diagonalized.   

\begin{proposition}\label{simudiago}
Let $G$ and $H$ be as in \eqref{2fasi} and assume that $\dsp K^{min}=\hat{K}$. Then, there exist 
$A,B\in O(2)$ such that 
\begin{align}\label{simudiagoeq1}
& A^TG_1A:= \left(
\begin{array}{cc}
{g_1}& 0\\
0 & \frac{1}{g_1}
\end{array}
\right)\,,
\qquad
A^TG_2A:= \left(
\begin{array}{cc}
\frac{1}{g_2} & 0\\
0 & g_2
\end{array}
\right)\,,
\\
\label{simudiagoeq2}
& B^T H_1B:= \left(
\begin{array}{cc}
{h_1}& 0\\
0 & \frac{1}{h_1}
\end{array}
\right)\,,
\qquad
 B^T H_2B:= \left(
\begin{array}{cc}
\frac{1}{h_2} & 0\\
0 & h_2
\end{array}
\right).
\end{align}
%con $h_1 g_1 = h_2 g_2 = K$.
\end{proposition}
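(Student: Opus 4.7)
The plan is to reduce to a normalized frame in which $G_1=H_1=I$ and then exploit the equality case of a generalized Rayleigh-quotient inequality. First I would apply the change of variables \eqref{trasformatedet} with $B_0:=G_1^{-1/2}$ and $A_0:=H_1^{-1/2}$; these lie in $SL(2)$ since $\det G_1=\det H_1=1$. In the new frame the first-phase data $\hat G_1 = B_0^T G_1 B_0$ and $\hat H_1=A_0^T H_1 A_0$ both reduce to $I$, while $\hat G_2=G_1^{-1/2}G_2G_1^{-1/2}$ and $\hat H_2=H_1^{-1/2}H_2H_1^{-1/2}$ are symmetric positive definite of determinant one; denote their maximal eigenvalues by $\alpha$ and $\beta$.

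Composing two transformations of the form \eqref{trasformatedet} amounts to multiplying the matrices $A,B\in SL(2)$ on the right, so $K^{min}$ is invariant under the normalization. Hence the hypothesis $K^{min}=\hat K=\sqrt{g_1 g_2 h_1 h_2}$ transfers unchanged to the new setting. Lemma \ref{primadisug} applied there reads $K^{min}(\hat G,\hat H)\le \sqrt{1\cdot\alpha\cdot 1\cdot\beta}=\sqrt{\alpha\beta}$, and therefore $g_1g_2h_1h_2\le \alpha\beta$. On the other hand, the substitution $w=G_1^{-1/2}v$ turns the definition of $\alpha$ into a generalized Rayleigh quotient
$$
\alpha = \max_{w\ne 0}\frac{w^T G_2 w}{w^T G_1 w}\le g_1 g_2,
$$
because $w^T G_2 w\le g_2|w|^2$ and $w^T G_1 w\ge g_1^{-1}|w|^2$. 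The analogous computation yields $\beta\le h_1 h_2$. Combining the three inequalities forces $\alpha=g_1 g_2$ and $\beta=h_1 h_2$.

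The remaining step is to read off the eigenstructure from this equality case, which is the main obstacle of the argument. Equality $\alpha=g_1g_2$ requires a single nonzero $w$ simultaneously attaining both extremal bounds, hence a vector that is both a maximal eigenvector of $G_2$ and a minimal eigenvector of $G_1$. In $\R^2$ this forces $G_1$ and $G_2$ to be simultaneously diagonalizable in an orthonormal basis, with the roles of maximal and minimal eigenvalues swapped between the two matrices. Choosing $A\in O(2)$ whose first column is the maximal eigenvector of $G_1$ (equivalently, the minimal eigenvector of $G_2$) then gives \eqref{simudiagoeq1}; applying the same reasoning to $(H_1,H_2)$ produces a $B\in O(2)$ satisfying \eqref{simudiagoeq2}.
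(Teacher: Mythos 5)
Your proof is correct and reaches the same conclusion by a closely related but cleaner route. Both you and the paper normalize to $\hat G_1=\hat H_1=I$ by conjugating with $G_1^{-1/2}$, $H_1^{-1/2}$, and both exploit the bounds $\hat g_2\le g_1 g_2$, $\hat h_2\le h_1 h_2$ (which are exactly \eqref{g1g2min}--\eqref{h1h2min} from the proof of Proposition~\ref{Kmin}) to force equality from $K^{min}=\hat K$. The difference is in how the equality case is converted into the eigenstructure of \eqref{simudiagoeq1}--\eqref{simudiagoeq2}. The paper first assumes $G_1,H_1$ diagonal, concludes from $\hat g_2=g_1g_2$ that $G_2$ is diagonal, and then runs a second, separate contradiction argument to rule out the wrong ordering $G_2=\diag(g_2,1/g_2)$. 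You instead observe that $\alpha=g_1g_2$, read off from the Rayleigh quotient $\max_w (w^TG_2w)/(w^TG_1w)$, forces the maximizer $w_0$ to be simultaneously a maximal eigenvector of $G_2$ and a minimal eigenvector of $G_1$; in dimension two this immediately yields an orthonormal basis that diagonalizes both with the eigenvalues in swapped order, so the form of $A$ (and likewise $B$) comes out in one stroke. This makes the argument more transparent and avoids the paper's two-stage step. Two small points you pass over silently but which do not create gaps: the degenerate cases $g_i=1$ or $h_i=1$ (where the corresponding phase is the identity and any orthonormal frame works), and the fact that you invoke Lemma~\ref{primadisug} where the paper uses the sharper Proposition~\ref{Kmin} directly, which is fine since the inequality is all you need.
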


\begin{proof}
We can always assume that $G_1$ and $H_1$ are as in 
\eqref{simudiagoeq1}-\eqref{simudiagoeq2}. We prove that, in this case,  also $G_2$ is  diagonal 
(For $H_1$ and $H_2$ we argue exactly in the same way).  
%Then $e_1$ and $e_2$ are not eigenvectors for $G_2$.
Set
\begin{align*}
& \hat{B}:= G_1^{-\frac 12}\,,
\qquad \hat G_1:= \hat{B} G_1\hat{B} =I\,, 
\qquad \hat G_2:= \hat{B} G_2 \hat{B}\,, \\
& \hat{A}:= H_1^{-\frac 12}\,, 
 \qquad \hat H_1 = \hat{A} H_1 \hat{A}=I\,, 
 \qquad \hat H_2 = \hat A H_2 \hat A \,. 
\end{align*}
Since  $\hat h_2 \le h_1 h_2$, $\hat g_2 \le g_1 g_2$ and  recalling Proposition \ref{Kmin} we have 
$$
(K^{min})^2 = \hat g_1 \hat h_1 \hat g_2 \hat h_2 =\hat g_2\hat h_2 \le \hat g_2 h_1 h_2 \le h_1 h_2 g_1 g_2 = \hat{K}^2,
$$ 
where $\hat g_i$ and $\hat h_i$  are the largest eigenvalues of $\hat G_i$ and $\hat H_i$. Since  $\dsp K^{min}=\hat{K}$, all the above inequalities are indeed equalities, and in particular 
$\hat g_2 = g_1 g_2$, that  implies  $G_2$ diagonal. 
\par
We are left to show that $e_2$ is the eigenvector associated with $g_2$. 
Arguing by contradiction, we assume that 
$$
G_2= \left(
\begin{array}{cc}
g_2 & 0\\
0 & \frac{1}{g_2}
\end{array}
\right).
$$
\no
Without loss of generality we may suppose that $g_1\le g_2$ and we set
$$
 \hat{B}:= G_1^{-\frac 12}\,,
\qquad \hat G_1:= \hat{B} G_1\hat{B} =I\,, 
\qquad \hat G_2:= \hat{B} G_2 \hat{B}. 
$$
It can be easily checked that  $\hat g_i < g_i$, that (recall  $K^{min}=\hat{K}$) provides  the following contradiction 
$$
(K^{min})^2 \le \hat g_1  h_1 \hat g_2  h_2 <  g_1 h_1 g_2  h_2  = \hat{K}^2.
$$ 
 \end{proof}
 
We are in a position to show that, for two phase coefficients, Proposition \ref{kmin} is sharp.

\begin{theorem}\label{mainthmGH}
Let $G_1,\, G_2, \, H_1,\, H_2 \in \Msdet$,  and let $K^{min}$ be as defined in \eqref{formulakmin}. Then we have
\begin{itemize}
\item[i)] Let $G$ and $H$ be as in \eqref{2fasi}. Every solution $f\in H^1_{\rm loc}(\Om;\C)$  to \eqref{beltrami} belongs to $L^p_{loc}(\Om;\C)$ for every $p\in [2, p_{K^{min}})$;
\item[ii)] There exist $G$ and $H$ as in \eqref{2fasi},  and a corresponding solution 
$f \in H^1_{loc}(\Om;\C)$ to  $\eqref{beltrami}$ with 
$\nabla f \notin L^{p_{K^{min}}}(B;\M)$ for every disk $B\subset \Om$.
\end{itemize}
\end{theorem}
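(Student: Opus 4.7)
\medskip

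\noindent\textbf{Proof proposal.}

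Part (i) is essentially a direct translation of Proposition \ref{kmin}. Once we recognize that the quantity $K^{min}$ defined by the formula \eqref{formulakmin} coincides with $K^{min}(G,H)$ of \eqref{teta} (which is precisely the content of Proposition \ref{Kmin}, relying on Lemma \ref{primadisug} and Proposition \ref{prodotto-autovalori}), the $L^p_{loc}$ integrability of $\nabla f$ for every $p\in[2,p_{K^{min}})$ is simply Proposition \ref{kmin} applied to the two-phase pair $(G,H)$.

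For part (ii), the plan is to recycle the critical example constructed in Section~3. Pick $K=K^{min}$ and choose
$$
\sigma_1=\diag(K,S_1),\qquad \sigma_2=\diag(K^{-1},S_2),\qquad 1/K\le S_1,S_2\le K,
$$
together with the measurable partition $\Om=E_1\cup E_2$ and the $\sigma$-harmonic function $u$ with affine boundary data provided by Theorem \ref{thm:afsz-generalized}. Use formula \eqref{ghsigma} to build the two-phase $G$ and $H$ associated with $\sigma=\chi_{E_1}\sigma_1+\chi_{E_2}\sigma_2$; by direct computation one gets $G_i,H_i\in\Msdet$ diagonal with $g_i h_i=K$ for $i=1,2$, so $\hat K=K$. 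Then let $v$ be a stream function of $u$ as in \eqref{stream} and set $f=(u,v)$. By the discussion in Section~2.1, $f\in H^1_{loc}(\Om;\C)$ solves the Beltrami equation with coefficients $G$ and $H$, and moreover $\|\nabla f\|_{L^p}$ and $\|\nabla u\|_{L^p}$ are comparable; thus the non-integrability in Theorem \ref{thm:afsz-generalized} transfers immediately to $\nabla f$.

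It remains to identify $K^{min}(G,H)=K$. On one side, Lemma \ref{primadisug} gives $K^{min}\le \hat K = K$. On the other side, one can argue either by a direct calculation (use $B=G_1^{-1/2}$, $A=H_1^{-1/2}$ in Proposition \ref{Kmin} and exploit the range $S_1,S_2\in[1/K,K]$ to see that both maximal eigenvalues of the transformed matrices equal $K\sqrt{S_j/S_i}$ and $K\sqrt{S_i/S_j}$, whose product is $K^2$), or, more elegantly, by using part~(i) itself: should $K^{min}<K$, then $p_{K^{min}}>p_K$ and hence $\nabla f\in L^{p_K}_{loc}$, which would contradict the conclusion of Theorem \ref{thm:afsz-generalized} since $\nabla f$ and $\nabla u$ share the same local integrability.

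The main obstacle in the whole argument is this last point, namely checking that our candidate $G,H$ actually realizes its own formula-defined $K^{min}$; once that is secured, the rest is bookkeeping. The convenient shortcut, avoiding any arithmetic on \eqref{formulakmin}, is precisely the self-consistency argument just outlined, which combines part~(i) with the non-trivial construction of Theorem \ref{thm:afsz-generalized}.
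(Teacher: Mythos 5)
Part (i) of your argument is fine and matches the paper: it is a restatement of Proposition \ref{kmin} once one identifies the quantity in \eqref{formulakmin} with $K^{min}(G,H)$ of \eqref{teta} via Proposition \ref{Kmin}. Your verification that the diagonal example has $K^{min}=\hat K=K$ (both the direct computation of $g_2 h_2 = K^2$ after conjugating by $G_1^{-1/2}$, $H_1^{-1/2}$, and the ``self-consistency'' argument via part (i) plus Theorem \ref{thm:afsz-generalized}) is also correct.

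However, part (ii) as you wrote it does not prove the stated theorem, and the gap is in the reading of the quantifiers. The matrices $G_1, G_2, H_1, H_2$ are \emph{given} in the hypotheses, and ``$G$ and $H$ as in \eqref{2fasi}'' means a two-phase mixture of precisely these four fixed matrices, with the existential quantifier ranging only over the measurable partition $\Om=E_1\cup E_2$. You instead set $K:=K^{min}$, freely choose diagonal $\sigma_1=\diag(K,S_1)$, $\sigma_2=\diag(K^{-1},S_2)$ with unconstrained $S_1, S_2\in[1/K,K]$, and construct a fresh pair of (diagonal) $G_i,H_i$ via \eqref{ghsigma}. These new matrices are generically different from the given $G_1,G_2,H_1,H_2$, so the critical solution $f$ you produce is not one associated with the given two-phase Beltrami coefficients. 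What is missing is the reduction of the arbitrary $G_i,H_i$ to the diagonal normal form: after conjugating by an optimal pair $A^*,B^*\in SL(2)$ realizing the minimum in \eqref{teta} (this makes $K(\tilde G,\tilde H)=K^{min}(\tilde G,\tilde H)$, hence $\hat{\tilde K}=K^{min}$ by squeezing with Lemma \ref{primadisug}), one invokes Proposition \ref{simudiago} to find a further rotation simultaneously diagonalizing the four transformed matrices with $\tilde g_i\tilde h_i=K^{min}$, and only then does the associated $\sigma$ come out in the form $\diag(S_i,(K^{min})^{\pm 1})$ required by Theorem \ref{thm:afsz-generalized} --- with $S_1, S_2$ now \emph{determined} by the original data, not free. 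Since the change of variables \eqref{trasformatedet} preserves local $L^p$-integrability, the non-integrability transfers back to the original $G,H$, closing the argument. This reduction via Proposition \ref{simudiago} is the essential content of the paper's proof of part (ii) that your write-up omits.
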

\begin{proof}
The first part of the Theorem is a particular case of Proposition \ref{kmin}, so we pass to the proof of ii). By the definition of $K^{min}$ and by 
Proposition \ref{simudiago}, we can always assume that $G$ and $H$ are diagonal as in \eqref{simudiagoeq1}, \eqref{simudiagoeq2}, with $g_i h_i = K^{min}$. A straightforward computation shows that the corresponding $\sigma$, defined according to  \eqref{sigma(G,H)bis}, takes the form 
\begin{equation*}
\sigma_1:= \diag\left(S_1,K^{min}\right), \qquad 
\sigma_2:= \diag\left(S_2,\frac{1}{K^{min}}\right), 
\qquad \qquad  K^{-1}  \le S_i \le K.
\end{equation*}
 Therefore,  ii) follows from Theorem \ref{thm:afsz-generalized}.
 \end{proof}
 
 \begin{remark}\label{f1aff}
 {\rm
 In ii) of Theorem \ref{mainthmGH}, we can also enforce that $f_1$ satisfies  suitable  
 affine boundary conditions. 
 }
 \end{remark}
 
 \section{Two phase conductivities}\label{tpc}
In this part we study the gradient summability of $\sigma$-harmonic functions corresponding to two phase conductivities.
 Let $E_1$ be a measurable subset of $\Om$ and  let 
 $E_2:= \Om \setminus E_1$. We assume that both $E_1$ and $E_2$ have positive measure.
Given  positive definite matrices  $\sigma_1,\, \sigma_2 \in \Md$, define 
\begin{equation}\label{2fasisig}
\sigma:= \chi_{_{E_1}} \sigma_1+ \chi_{_{E_2}} \sigma_2.
\end{equation}
Set 
$$
K^{min}=K^{min}(\sigma):= K^{min}(G(\sigma), H(\sigma)).
$$

\subsection{Main results and optimality of the bound  \eqref{bestbound1}}
We can now rephrase Theorem  \ref{mainthmGH} (see also Remark \ref{f1aff}) in terms of the coefficient $\sigma$.
 
 \begin{theorem}\label{thm:sigma-thm}
Let $\sigma_1,\, \sigma_2 \in \Md$ be positive definite.
%,  and let $K^{min}$ be defined by \eqref{Kmindisigma}. 
\begin{itemize}
\item[i)] Let $\sigma$ be a two phase conductivity as in \eqref{2fasisig}. Every solution $u\in H^1_{loc}(\Om)$  to \eqref{elliptic} satisfies   $\nabla u \in L^p_{loc}(\Om)$ for every $p\in [2, p_{K^{min}})$;
\item[ii)] There exist $\sigma$  as in \eqref{2fasisig} and $(v_1,v_2)\in\R^2$ such that the solution $u \in H^1_{loc}(\Om)$ to 
\begin{equation*}
\begin{cases}
\div  (\sigma \nabla u) = 0 & \text{ in } \Om \\
u(x) = v_1 x_1 + v_2 x_2 & \text{ on } \partial \Om \\
\end{cases}
\end{equation*}
satisfies for every ball $B\subset\Om$
$$
\int_B |\nabla u| ^{p_{K^{min}}} dx = \infty \,.
$$
\end{itemize}
\end{theorem}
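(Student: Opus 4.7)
The plan is to translate the $G$--$H$ version of the claim, Theorem \ref{mainthmGH}, into the $\sigma$ language via the one-to-one correspondence \eqref{ghsigma}--\eqref{sigma(G,H)bis} between two-phase Beltrami coefficients and two-phase conductivities developed in Section 2. All the analytic content has already been loaded into Theorem \ref{mainthmGH} (which in turn rests on the laminate construction of Theorem \ref{thm:afsz-generalized}), so what remains is a fairly mechanical bookkeeping argument.

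For part i) I would start by noting that if $\sigma$ is two-phase, then $G=G(\sigma)$ and $H=H(\sigma)$, defined via \eqref{ghsigma}, are also two-phase (with the \emph{same} partition $\{E_1,E_2\}$), and $K^{min}(\sigma)=K^{min}(G,H)$ by the definition given in the Remark following Proposition \ref{kmin}. Any $\sigma$-harmonic $u\in H^1_{loc}(\Om)$ can be completed to a map $f=(u,v)\in H^1_{loc}(\Om;\C)$, where the stream function $v$ is determined (up to an additive constant) by \eqref{stream}; this $f$ is a Beltrami solution associated with the above $G,H$. Theorem \ref{mainthmGH} i) then delivers $\nabla f\in L^p_{loc}(\Om)$ for every $p\in[2,p_{K^{min}})$, and the equivalence $\nabla u\in L^p_{loc}\Leftrightarrow \nabla f\in L^p_{loc}$ recorded right after \eqref{stream} yields the claim.

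For part ii) the input is Theorem \ref{mainthmGH} ii), together with the strengthening recorded in Remark \ref{f1aff}. Taking $G_i:=G(\sigma_i)$ and $H_i:=H(\sigma_i)$ (so that $K^{min}(G,H)=K^{min}(\sigma_1,\sigma_2)$), Theorem \ref{mainthmGH} ii) produces two-phase $G,H$ of the form \eqref{2fasi} and a Beltrami solution $f=(f_1,f_2)\in H^1_{loc}(\Om;\C)$ whose first component satisfies an affine boundary condition $f_1(x)=v_1 x_1+v_2 x_2$ on $\partial\Om$, and with $\nabla f\notin L^{p_{K^{min}}}(B;\M)$ on every disk $B\subset\Om$. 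Setting $\sigma:=\sigma(G,H)$ via \eqref{sigma(G,H)bis}, this is again a two-phase field (since the formula is deterministic and $G,H$ take only two joint values), and $u:=f_1$ solves the Dirichlet problem \eqref{elliptic} with the prescribed affine boundary data. The blow-up of $|\nabla u|^{p_{K^{min}}}$ on every ball follows from that of $|\nabla f|^{p_{K^{min}}}$ via the same gradient equivalence used in part i).

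The only conceptual point that deserves checking — and which I expect to be the only nontrivial step in the translation — is that the critical exponent $p_{K^{min}}$ produced by Theorem \ref{mainthmGH} ii) matches the $p_{K^{min}(\sigma_1,\sigma_2)}$ appearing in the statement of part ii). This is immediate from the identity $K^{min}(\sigma)=K^{min}(G(\sigma),H(\sigma))$, which is how $K^{min}$ was defined for conductivities in the first place, so no further reconciliation is needed.
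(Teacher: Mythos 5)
Your proposal is correct and takes the same route the paper does: the paper offers no separate proof of Theorem \ref{thm:sigma-thm} beyond the sentence ``We can now rephrase Theorem \ref{mainthmGH} (see also Remark \ref{f1aff}) in terms of the coefficient $\sigma$,'' and your argument is exactly that translation, carried out via the bijection \eqref{ghsigma}--\eqref{sigma(G,H)bis}, the stream-function completion \eqref{stream}, and the gradient-norm equivalence between $u$ and $f=(u,v)$. The one point worth stating explicitly, which you correctly take for granted, is that the affine boundary data $v_1x_1+v_2x_2$ in part ii) arises because the underlying construction in Theorem \ref{thm:afsz-generalized} prescribes $f|_{\partial\Om}=x$, and the linear change of variables \eqref{trasformatedet} used to undo the reduction to diagonal $G,H$ turns this into a linear boundary condition for the first component.
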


We are in a position to  prove that the bound in \eqref{bestbound1} is achieved by a suitable conductivity $\sigma$ of the type 
\begin{equation}\label{sigmaS}
\sigma=\chi_{E_1} \left(
\begin{array}{cc}
a & b \\
-b & a 
\end{array}
\right)
+
\chi_{E_2} \left(
\begin{array}{cc}
a & -b \\
b & a 
\end{array}
\right)
\,, \quad \text{ with } a=\lambda,\, b=\pm\sqrt{1-\lambda^2}\,.
\end{equation}
\begin{theorem}\label{mainthmS}
There exist $\sigma$  as in \eqref{sigmaS},  and a corresponding solution 
$u \in H^1_{loc}(\Om)$ of  \eqref{elliptic} with affine boundary conditions such that 
$\nabla u \notin L^{p_{K_\lambda}}(B)$ for every disk $B\subset \Om$, where $p_{K_\lambda}$ is given by \eqref{pespli}.
\end{theorem}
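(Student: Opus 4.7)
The plan is to deduce Theorem \ref{mainthmS} as a direct consequence of Theorem \ref{thm:sigma-thm} (ii) applied to the prescribed pair $\sigma_1,\sigma_2$ of \eqref{sigmaS}. Since Theorem \ref{thm:sigma-thm} (ii) produces, for \emph{any} pair of positive-definite matrices, a two-phase coefficient $\sigma\in\{\sigma_1,\sigma_2\}$ together with affine boundary data attaining the critical exponent $p_{K^{min}(\sigma)}$, the whole matter reduces to one algebraic verification: checking $K^{min}(\sigma)=K_\lambda$ when $\sigma_1,\sigma_2$ have the form \eqref{sigmaS} with $a=\lambda$ and $b=\pm\sqrt{1-\lambda^2}$. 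Once this identity is in hand, $p_{K^{min}}=p_{K_\lambda}$ and Theorem \ref{thm:sigma-thm} (ii) yields the desired $u$.

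First I substitute \eqref{sigmaS} into \eqref{ghsigma}. Using $\sigma_i^S=\lambda I$ and $\det\sigma_i=1$, this is immediate and gives
$$
G_1=G_2=I,\qquad
H_1=\frac{1}{\lambda}\begin{pmatrix}1 & -b\\ -b & 1\end{pmatrix},\qquad
H_2=\frac{1}{\lambda}\begin{pmatrix}1 & b\\ b & 1\end{pmatrix}.
$$
Both $H_1$ and $H_2$ share the spectrum $\{K_\lambda,1/K_\lambda\}$, and a direct multiplication exposes the crucial algebraic identity $H_1 H_2=I$, i.e.\ $H_2=H_1^{-1}$.

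Next I plug these data into \eqref{formulakmin} of Proposition \ref{Kmin}. The $G$-factor is the largest eigenvalue of $G_1^{-1/2}G_2 G_1^{-1/2}=I$, which is $1$. The $H$-factor is the largest eigenvalue of $H_1^{-1/2}H_2 H_1^{-1/2}$; by $H_2=H_1^{-1}$ this matrix equals $H_1^{-2}$, whose top eigenvalue is $K_\lambda^2$. Therefore
$$K^{min}(\sigma)=\sqrt{1\cdot K_\lambda^2}=K_\lambda,$$
which matches the Astala bound \eqref{bestbound1}. Invoking Theorem \ref{thm:sigma-thm} (ii) then produces a measurable partition $(E_1,E_2)$ of $\Om$ and a vector $(v_1,v_2)\in\R^2$ such that the $\sigma$-harmonic function $u$ solving $\div(\sigma\nabla u)=0$ with $u(x)=v_1 x_1+v_2 x_2$ on $\partial\Om$ satisfies $\int_B|\nabla u|^{p_{K_\lambda}}\,dx=+\infty$ for every disk $B\subset\Om$.

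The step carrying the real content is the identity $H_2=H_1^{-1}$, which is specific to the antisymmetric extremal configuration \eqref{sigmaS}. Geometrically it places $H_1$ and $H_2$ as antipodal points with respect to the identity in the hyperbolic symmetric space $SL(2)/SO(2)$; this antipodality is precisely the obstruction that prevents any common change of variables in $SL(2)\times SL(2)$ from simultaneously reducing the $h$-eigenvalues of both phases, and it is what pins $K^{min}(\sigma)$ to the Astala value $K_\lambda$. All the genuinely hard analysis --- the staircase-laminate / Baire category construction at the critical exponent --- is already absorbed into Theorems \ref{thm:afsz-generalized} and \ref{thm:sigma-thm}, so once this rigidity is observed the argument is complete.
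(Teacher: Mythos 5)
Your proof is correct and follows the same route as the paper: compute $G_i,H_i$ via \eqref{ghsigma}, verify $K^{min}(\sigma)=K_\lambda$, and invoke Theorem \ref{thm:sigma-thm} (ii). The paper states the equality $K(\sigma)=K^{min}(\sigma)=K_\lambda$ without displaying the intermediate algebra; your observation that $H_2=H_1^{-1}$, which reduces Proposition \ref{Kmin} to finding the top eigenvalue of $H_1^{-2}$, is exactly the computation the paper leaves to the reader, and it is carried out correctly.
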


\begin{proof}
By \eqref{ghsigma} we have $G_i(\sigma) = I$ for $i=1,2$, and 
$$
H_1 = \left(
\begin{array}{ll}
{\lambda}^{-1} & \sqrt{1-\lambda^2} \\
\sqrt{1-\lambda^2} & {\lambda}^{-1} 
\end{array}
\right),
\quad
H_2 = \left(
\begin{array}{ll}
{\lambda}^{-1} & -\sqrt{1-\lambda^2} \\
- \sqrt{1-\lambda^2} & {\lambda}^{-1} 
\end{array}
\right).
$$
Therefore $K(\sigma)=K^{min}(\sigma)=
\frac{1+\sqrt{1-\lambda^2}}{\lambda} = K_\lambda$. 
We conclude in view of  Theorem \ref{thm:sigma-thm}. 
\end{proof}

Finally, we fix the ellipticity $\lambda$ and we characterize the pairs $(\sigma_1,\sigma_2)$ corresponding to solutions with critical gradient integrability. 

\begin{theorem}\label{rigiditythm} 
Let $p_{K_\lambda}$ and $p_{K_\lambda^{sym}}$ be as in \eqref{pespli}. 
\begin{itemize}
\item[i)] Let $\sigma\in \ms\left(\lambda,\Om\right)$ be a two phase 
conductivity as in \eqref{2fasisig} such that  there exists a solution 
$u\in H^1(\Om)$ of \eqref{elliptic} with 
$\nabla u \notin 
L_{loc}^{p_{K_\lambda}}(\Om;\R^2)$; then  $\sigma$ takes the following form
\begin{equation*}
\sigma=\chi_{E_1} \left(
\begin{array}{cc}
a & b \\
-b & a 
\end{array}
\right)
+
\chi_{E_2} \left(
\begin{array}{cc}
a & -b \\
b & a 
\end{array}
\right)
\,, \quad \text{ with } a=\lambda,\, b=\pm\sqrt{1-\lambda^2}\,.
\end{equation*}
\item[ii)] 
Let $\sigma\in\ms_{sym}\left(\lambda,\Om\right)$ be a two phase conductivity as in \eqref{2fasisig}, 
such that  there exists a solution $u\in H^1(\Om)$ of \eqref{elliptic} with 
$\nabla u \notin 
L_{loc}^{p_{K^{sym}_\lambda}}(\Om;\R^2)$; then, up to a rotation, $\sigma$ takes the following form
\begin{equation*}
\sigma=\chi_{E_1} \diag(S_1,\lambda^{-1})+
\chi_{E_2} \diag(S_2,\lambda)
\,, \quad \text{ with } \lambda\le S_1, \, S_2 \le \lambda^{-1}\,.
\end{equation*}
\end{itemize}
\end{theorem}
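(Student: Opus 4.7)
My plan is to force a chain of equalities among the various ellipticity constants, which will pin down the form of $\sigma$. The hypothesis $\nabla u\notin L^{p_{K_\lambda}}_{loc}(\Om;\R^2)$ combined with Theorem \ref{thm:sigma-thm}(i) gives $K^{min}(\sigma)\ge K_\lambda$ (and analogously $\ge K^{sym}_\lambda$ in case (ii)), while Proposition \ref{ALN} gives the upper bound $K^{min}\le K(\sigma)\le K_\lambda$ (respectively $\le K^{sym}_\lambda$). Both inequalities therefore collapse to equalities: $K^{min}=K=K_\lambda$ (or the symmetric analogue). Since $K=\max\{g_1 h_1,\, g_2 h_2\}$ and Remark \ref{rigidity0} asserts that $K^{min}<K$ whenever $g_1 h_1 \neq g_2 h_2$, the equality $K^{min}=K$ forces the balance $g_1 h_1 = g_2 h_2$, so both phases simultaneously attain the pointwise bound on sets of positive measure.

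For part (i), since $K(\sigma)=K_\lambda$ is attained on \emph{each} phase, applying the pointwise rigidity statement of Proposition \ref{optimality} phase-by-phase shows that each $\sigma_i$ must take the form
$\sigma_i=\begin{pmatrix}\lambda & b_i\\ -b_i & \lambda\end{pmatrix}$
with $b_i=\pm\sqrt{1-\lambda^2}$. If $\sigma_1=\sigma_2$ the coefficient is constant and classical elliptic regularity makes $u$ real-analytic, so $\nabla u \in L^p_{loc}$ for every $p<\infty$, contradicting the hypothesis. Therefore $b_1=-b_2$, and we recover exactly the form \eqref{sigmaS}.

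For part (ii), I invoke Proposition \ref{simudiago}, which applies precisely because $K^{min}=\hat K=K$ in our setting: $G_1$ and $G_2$ share an orthonormal eigenbasis in which their maximum eigenvalues occupy orthogonal positions. Since $\sigma_i=\sqrt{\det\sigma_i}\,G_i^{-1}$ for symmetric $\sigma_i$, this common eigenbasis also diagonalizes $\sigma_1$ and $\sigma_2$. Rotating coordinates to this basis $(v_1,v_2)$ and relabelling the phases if necessary, I may write $\sigma_1=\diag(\alpha_1,\beta_1)$ with $\alpha_1\le\beta_1$ and $\sigma_2=\diag(\alpha_2,\beta_2)$ with $\beta_2\le\alpha_2$. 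In this basis $H_i=\diag(\sqrt{\det\sigma_i},\,1/\sqrt{\det\sigma_i})$, and the analogous orthogonality condition of Proposition \ref{simudiago} applied to $H_1,H_2$ translates into the determinant dichotomy $\det\sigma_1\ge 1 \ge \det\sigma_2$. On the other hand, the symmetric version of Proposition \ref{optimality} forces, on each phase, either the smallest eigenvalue to equal $\lambda$ or the largest to equal $1/\lambda$. Combining the two sets of constraints, a short case analysis shows that the only admissible outcome is $\beta_1=1/\lambda$ and $\beta_2=\lambda$, with $S_1:=\alpha_1$ and $S_2:=\alpha_2$ free in $[\lambda,1/\lambda]$ -- precisely the stated form.

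The step I expect to be most delicate is the case analysis in part (ii): I need to verify that the determinant dichotomy together with the attainment conditions rules out every pairing other than the stated one. Boundary configurations (such as $\det\sigma_i=1$, where $H_i$ becomes proportional to the identity and its eigenbasis is not unique) must be checked separately, but they comfortably fit the stated form once the eigenvalues of $\sigma_i$ are forced to lie in $\{\lambda,1/\lambda\}$.
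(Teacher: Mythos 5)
Your proof is correct and follows essentially the same route as the paper's own (very terse) argument: establish the chain of equalities $K_\lambda \le K^{min}\le\hat K\le K\le K_\lambda$ via Theorem \ref{thm:sigma-thm}, Lemma \ref{primadisug} and Proposition \ref{ALN}, then invoke Proposition \ref{optimality} (part i) and Proposition \ref{simudiago} (part ii). You fill in some details the paper glosses over — the explicit argument that $b_1\neq b_2$ in part (i) via the constant-coefficient contradiction, and the unpacking in part (ii) of how the simultaneous-diagonalization structure and the determinant dichotomy pin down $\beta_1=1/\lambda$, $\beta_2=\lambda$ — which is a sensible refinement but not a different method.
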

\begin{proof}
i) From Proposition \ref{ALN} it follows that 
$K\le \frac{1+\sqrt{1-\lambda^2}}{\lambda} = K_\lambda$. 
On the other hand, i) of Theorem \ref{thm:sigma-thm} 
yields $K^{min}\geq K_{\lambda}$. 
%Therefore $K^{min}=K = \frac{1+\sqrt{1-\lambda^2}}{\lambda}$. 
Lemma \ref{primadisug} implies
$K^{min} = \hat{K} = K_\lambda$, thus yielding $g_i h_i = K^{min}$ in both phases.
Now apply Proposition \ref{optimality}  to conclude that $i)$ holds true. 
ii) 
Again from Proposition \ref{ALN}, Theorem \ref{thm:sigma-thm} and Lemma \ref{primadisug}  
we deduce that   $K^{min}=\hat K =\frac{1}{\lambda}$.  The thesis follows from Proposition \ref{simudiago}.
\end{proof}

\subsection{The explicit formula for $K^{min}$}

Here  we give a  direct formula for $K^{min}$ depending on $\sigma_1$ and $\sigma_2$.  

\begin{proposition}\label{sc}
Let $\sigma_1,\, \sigma_2 \in \M$ be positive definite. 
Denote by $\Sigma_1$ and $\Sigma_2$ the symmetric part of 
$\sigma_1$ and $\sigma_2$ respectively, and by $d_1$ and $d_2$ 
their determinant, 
$$
\Sigma_i := \sigma_i^S \,, \quad d_i:=\det\Sigma_i \,, \quad 
i=1,2 \,.
$$
\no
Then,
\begin{equation}\label{Kmindisigma}
K^{min}=\sqrt{
\frac{m + \sqrt{m^2 - 4d_1d_2}}{2\sqrt{d_1d_2}} \
\frac{n + \sqrt{n^2 -4}}{2}
}\,,
\end{equation}
where
\begin{align*}
& m:=(\sigma_2)_{11}  (\sigma_1)_{22}  +
(\sigma_1)_{11}  (\sigma_2)_{22}   -
\frac{1}{2} 
\Big((\sigma_2)_{12}+(\sigma_2)_{21}\Big)
\Big((\sigma_1)_{12}+(\sigma_1)_{21}\Big) \,, \\
& n:= \frac{1}{\sqrt{d_1 d_2}}
\left[
\det\sigma_1 + \det\sigma_2 -\frac{1}{2} 
\Big((\sigma_1)_{21} - (\sigma_1)_{12}\Big)
\Big((\sigma_2)_{21} - (\sigma_2)_{12}\Big)
\right] \,.
\end{align*}
If in addition $\sigma_1,\, \sigma_2 \in \Msim$, then 
\eqref{Kmindisigma} reduces itself to
$$
K^{min} =  
\max \left\{ \sqrt{\frac{1}{\lambda_1}},\sqrt{\lambda_2} \right\}\,,
$$
where $\lambda_1\le \lambda_2$ are the  eigenvalues of 
$\sigma_1^{-1/2} \sigma_2 \sigma_1^{-1/2}$.
\end{proposition}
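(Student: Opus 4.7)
The plan is to reduce the computation of $K^{min}$ to two eigenvalue problems via Proposition~\ref{Kmin}, and then exploit the explicit formulas \eqref{ghsigma} relating $(G,H)$ to $\sigma$. Since $G_1,H_1\in\Msdet$, both $G_1^{-1/2}$ and $H_1^{-1/2}$ lie in $SL(2)$, so by Proposition~\ref{Kmin},
$$
K^{min} = \sqrt{g_2'\, h_2'},
$$
where $g_2'$ and $h_2'$ are the largest eigenvalues of the symmetric positive definite matrices $G_1^{-1/2}G_2G_1^{-1/2}$ and $H_1^{-1/2}H_2H_1^{-1/2}$ respectively. Since $\det G_i=\det H_i=1$, each of these matrices has determinant one, and so the two eigenvalues of each are positive reciprocals of one another. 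Hence
$$
g_2' = \frac{\tau_G + \sqrt{\tau_G^2-4}}{2}, \qquad
h_2' = \frac{\tau_H + \sqrt{\tau_H^2-4}}{2},
$$
with $\tau_G := \mathrm{tr}(G_1^{-1}G_2)$ and $\tau_H := \mathrm{tr}(H_1^{-1}H_2)$ (using the cyclic property of the trace and that the two matrices in question are similar to $G_1^{-1}G_2$ and $H_1^{-1}H_2$).

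Next I would compute $\tau_G$ and $\tau_H$ explicitly using \eqref{ghsigma}. Writing $G_i = \sqrt{d_i}\,\Sigma_i^{-1}$ one gets $G_1^{-1}G_2 = \sqrt{d_2/d_1}\,\Sigma_1\Sigma_2^{-1}$, and a direct two-by-two calculation yields $\tau_G = m/\sqrt{d_1d_2}$ with the $m$ in the statement. For $H$ one uses
$$
H_i=\frac{1}{\sqrt{d_i}}\begin{pmatrix}\det\sigma_i & a_i \\ a_i & 1\end{pmatrix},
\qquad a_i := \frac{(\sigma_i)_{21}-(\sigma_i)_{12}}{2},
$$
(noting $\det\sigma_i - a_i^2 = d_i$ in dimension two, so $H_i\in\Msdet$), and the analogous computation gives $\tau_H = n$. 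Substituting these into the expressions for $g_2'$ and $h_2'$ and taking $K^{min}=\sqrt{g_2'h_2'}$ produces formula \eqref{Kmindisigma}.

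For the symmetric specialization, $a_i=0$ and $\det\sigma_i=d_i$, so $H_i=\mathrm{diag}(\sqrt{d_i},1/\sqrt{d_i})$ is already diagonal. A short calculation then gives $h_2' = \sqrt{\max(d_1,d_2)/\min(d_1,d_2)}$. On the $G$ side, $G_i=\sqrt{d_i}\,\sigma_i^{-1}$, so the eigenvalues of $G_1^{-1}G_2$ are $\sqrt{d_2/d_1}\,\lambda_i^{-1}$, where $\lambda_1\le\lambda_2$ are the eigenvalues of $\sigma_1^{-1/2}\sigma_2\sigma_1^{-1/2}$; using $\lambda_1\lambda_2=d_2/d_1$ one finds $g_2'=\sqrt{\lambda_2/\lambda_1}$. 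A case distinction on whether $d_2\ge d_1$ (equivalently $\lambda_2\ge 1/\lambda_1$) collapses $g_2'h_2'$ into $\max\{\lambda_2,\,1/\lambda_1\}$, giving the stated formula.

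The main obstacle is purely computational: tracking the off-diagonal factors $\tfrac12((\sigma_i)_{12}+(\sigma_i)_{21})$ and $\tfrac12((\sigma_i)_{21}-(\sigma_i)_{12})$ through the algebraic identity $\det\sigma_i - a_i^2 = d_i$ and through the two trace computations. There is no conceptual hurdle beyond Proposition~\ref{Kmin}; all of the heavy lifting (the fact that $K^{min}$ is realized after the conjugation by $G_1^{-1/2}$ and $H_1^{-1/2}$) has already been done there.
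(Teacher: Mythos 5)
Your proof is correct and follows the same core route as the paper's: both invoke Proposition~\ref{Kmin} to reduce the problem to the largest eigenvalues of $G_1^{-1/2}G_2G_1^{-1/2}$ and $H_1^{-1/2}H_2H_1^{-1/2}$, observe these are $SL(2)$ matrices (so the characteristic polynomial is $\lambda^2-\tau\lambda+1$), and express the relevant traces in terms of $\sigma_1,\sigma_2$ via \eqref{ghsigma}. In the paper the two traces are written as $\tr(\Sigma_2\Adj\Sigma_1)/\sqrt{d_1d_2}$ and $\tr(H_2\Adj H_1)$, which, since $G_i^{-1}=\Adj G_i$ and $H_i^{-1}=\Adj H_i$, coincide with your $\tr(G_1^{-1}G_2)$ and $\tr(H_1^{-1}H_2)$; the identity $\tr(\Sigma_1\Adj\Sigma_2)=\tr(\Sigma_2\Adj\Sigma_1)$ (both equal $\tr\Sigma_1\tr\Sigma_2-\tr(\Sigma_1\Sigma_2)$ in $2\times2$) reconciles the two phrasings, and your observation $\det\sigma_i-a_i^2=d_i$ is exactly what makes $\det H_i=1$. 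The only genuinely different step is the symmetric specialization: the paper routes through \eqref{ellisim}, identifying $g_2h_2=\max\{1/\lambda_1,\lambda_2\}$ via the auxiliary conductivity $\tilde\sigma_2=\tilde G_2^{-1}/(\tilde H_2)_{22}$ and then showing $\tilde\sigma_2$ has the same spectrum as $\sigma_1^{-1/2}\sigma_2\sigma_1^{-1/2}$; you instead compute $g_2'=\sqrt{\lambda_2/\lambda_1}$ and $h_2'=\sqrt{\max(d_1,d_2)/\min(d_1,d_2)}$ separately and merge them with a case split on whether $\lambda_1\lambda_2\gtrless1$. Your version is slightly more elementary (no appeal to \eqref{ellisim}), while the paper's version makes transparent why the eigenvalues of the conjugate conductivity appear. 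Both are complete.
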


\begin{proof}
From Proposition \ref{Kmin} it follows that $K^{min}= \sqrt{g_2 h_2}$ where
$g_2$ and $h_2$ are the maximum eigenvalues of 
$\widetilde G_2:=G_1^{-1/2} G_2 G_1^{-1/2}$ and 
$\widetilde H_2:= H_1^{-1/2} H_2 H_1^{-1/2}$
respectively. 
Since by \eqref{ghsigma}, 
$\dsp G_i = \frac{1}{\sqrt{d_i}}\, \Adj \Sigma_i $, one has
$$
\widetilde G_2 = \frac{\sqrt{d_1}}{\sqrt{d_2}} \,
J \, \Sigma_1^{-1/2}\Sigma_2 \Sigma_1^{-1/2} J^T \,.
$$
The eigenvalues of $\widetilde G_2$ are solutions to the following 
equation in $\lambda$
$$
\det\Big( \sqrt{\frac{d_1}{d_2}} \Sigma_2 - 
\lambda \Sigma_1 \Big) = 0 \,.
$$
Set $M:=\Sigma_2\ \Adj\Sigma_1$. Since
 $$
\det\Big( \sqrt{\frac{d_1}{d_2}} \Sigma_2 - 
\lambda \Sigma_1 \Big) = 0 \ \ 
\Longleftrightarrow \ \
\lambda^2 - 
\frac{\tr M}{\sqrt{d_1d_2}} \ \lambda 
+1=0 \,,
$$
the maximum eigenvalue $g_2$ is defined by
$$
g_2 = \frac{\frac{\tr M}{\sqrt{d_1d_2}} + \sqrt{\frac{(\tr M)^2}{d_1 d_2}-4}}{2}\,.
$$
A straightforward computation shows that 
$$
\tr M = 
(\sigma_2)_{11}  (\sigma_1)_{22}  +
(\sigma_1)_{11}  (\sigma_2)_{22}   -
\frac{1}{2} 
\Big((\sigma_2)_{12}+(\sigma_2)_{21}\Big)
\Big((\sigma_1)_{12}+(\sigma_1)_{21}\Big)
=: m \,.
$$
Similarly, one finds that $h_2$ is the largest root of the equation 
$$
\det(H_2 - \lambda H_1) = 0 \,.
$$
Therefore
$$
h_2 = \frac{\tr N + \sqrt{(\tr N)^2 -4}}{2} \,,
$$
where $N:= H_2\ \Adj H_1$. 
It is easily checked that 
$$
\tr N = \frac{1}{\sqrt{d_1 d_2}}
\left[
\det\sigma_1 + \det\sigma_2 -\frac{1}{2} 
\Big((\sigma_1)_{21} - (\sigma_1)_{12}\Big)
\Big((\sigma_2)_{21} - (\sigma_2)_{12}\Big)
\right] =: n\,.
$$
Now assume that $\sigma_1, \sigma_2$ are symmetric.
By \eqref{ellisim} we find 
$g_2 h_2 = 
\max \left\{ \frac{1}{\lambda_1},\lambda_2 \right\}$, where 
$\lambda_1\leq\lambda_2$ are the eigenvalues of 
\begin{equation*}
\tilde\sigma_2 :=
\frac{1}{(\widetilde{H}_2)_{22}} \widetilde G_2^{-1} \,.
\end{equation*}
Since by \eqref{ghsigma}, 
$\dsp G_i = \frac{1}{\sqrt{\det\sigma_i}}\, \Adj \sigma_i $, one has
\begin{equation*}
\begin{aligned}
\tilde\sigma_2 & = \frac{1}{(\widetilde{H}_2)_{22}} 
\frac{\sqrt{\det\sigma_2}}{\sqrt{\det\sigma_1}} \,\,
J \sigma_1^{1/2}\sigma_2^{-1}  \sigma_1^{1/2} J^T \\
& = \frac{1}{(\widetilde{H}_2)_{22}} 
\frac{1}{\sqrt{\det(\sigma_1^{1/2}\sigma_2^{-1}  \sigma_1^{1/2})}}
\Adj( \sigma_1^{1/2}\sigma_2^{-1}  \sigma_1^{1/2})\\
& = \frac{1}{(\widetilde{H}_2)_{22}} 
\sqrt{\det(\sigma_1^{1/2}\sigma_2^{-1}  \sigma_1^{1/2})}
\big( \sigma_1^{1/2}\sigma_2^{-1}  \sigma_1^{1/2}\big)^{-1}
 \,.
\end{aligned}
\end{equation*}

\no
The eigenvalues of $\tilde\sigma_2$ are those of 
$\sigma_1^{-1/2}\sigma_2 \sigma_1^{-1/2}$ as soon as we prove that 
$$ 
(\widetilde{H}_2)_{22} =
\sqrt{\det(\sigma_1^{1/2}\sigma_2^{-1}  \sigma_1^{1/2})} =
\sqrt{\frac{\det\sigma_1}{\det\sigma_2}}
\,.
$$
This follows from the fact that $H_1$ and $H_2$ are diagonal and therefore 
$$
(\widetilde{H}_2)_{22} = \frac{({H}_2)_{22}}{({H}_1)_{22}}
= \frac{\sqrt{\det\sigma_1}}{\sqrt{\det\sigma_2}} \,.
$$
\end{proof}

\begin{remark}
{\rm Keeping the notation of Proposition \ref{sc}, 
if $\sigma_1,\, \sigma_2 \in \Msim$ are positive definite, 
a straightforward computation shows that  
$$
p_{K^{min}} = \frac{2}{1-\min\left\{ \sqrt{\frac{1}{\lambda_1}},\sqrt{\lambda_2} \right\}}\,.
$$ 
}
\end{remark}

\section{Some $G$-closure results revisited}
Quasiconformal mappings  appear in many branches of mathematics. Only rather recently  they have shown their power in the theory of composites. In the composite material literature one of the typical goals is to determine the so-called ``$G$-closure of a set of conductivities''. Roughly speaking this means the following. Assume that two matrices, called the conductivity of the ``phases'' and denoted by $\sigma_1,\sigma_2\in \msl$ are given. Consider a two phase composites, i.e. a conductivity $\sigma$ of the form  $\sigma= \sigma_1 \chi_{E_1}+\chi_{E_2}\sigma_2$ where $E_1$ and 
$E_2$ are a pair of disjoint measurable sets with $E_1\cup E_2=\Om$. 
The task is to find the set of all possible ``effective'' tensors $\sigma^*$ 
that can be obtained by mixing these two phases while letting $E_1$ and 
$E_2$ vary in all the admissible ways. To make this concept precise, one needs to define an appropriate concept which is called $H$-convergence and was invented by Murat and Tartar. This notion was a  general framework which was necessary to treat the case non-symmetric conductivity $\sigma$ which could not be treated by the $G$-convergence previously introduced by Spagnolo. In both cases one can establish compactness results and  a notion of closure. We will continue to call it $G$-closure according to tradition even if, in this particular case, one really needs to use the $H$-convergence because the tensor $\sigma$ is not assumed to be symmetric a priori. We refer to the recent book of Tartar \cite{T} and reference therein for an extensive treatment.

In this context, an  extensive use of certain special properties of solutions to \eqref{elliptic} and therefore to \eqref{stream}, has been made. For an accurate review, we refer to \cite{Mbook}, see Chapter 4. As a particularly interesting case, we consider
Milton's work computing the so called  $G$-closure of a mixture of two materials with arbitrary volume fractions \cite{M-hall}. 
In the symmetric case, i.e. when both phases have a symmetric conductivity, the $G$-closure was found in the eighties. The result has a long history which is reviewed in a very recent work by Francfort and Murat \cite{FM}. We refer the reader to the reference therein for more details about the original work.

Milton studied the general case without assuming symmetry. He proved that one can recover the $G$-closure for this case by first reducing the problem to the study of a two-phase composite in which, in addition,  each  phase is  symmetric, \cite{M-hall} and Chapter 4.3 in \cite{Mbook},  and then applying the results for the symmetric case.
  Milton explained how his work  was  generalizing  previous work by many authors including Keller, Dykhne, Mendelsohn and that, in turn, he was inspired by some work of Francfort and Murat and some unpublished work by Tartar now available in \cite{T}, Lemma 20.3: in two dimensions ``homogenization commutes with certain Moebius transformations''.  Without entering into too many details, we want to emphasize here that the basic ingredients behind these transformations have an elegant geometrical counterpart when expressed in terms of the Beltrami equation.

  When $\sigma$ is two-phase, by \eqref{ghsigma}, so are the matrices $H$ and $G$. In particular $H=H_1\chi_1+H_2\chi_2$. Consider now the equation \eqref{beltramiGH} and make the affine change of variable $f\to F=A f$, then $F$ satisfies a new equation in which  the matrix $H$ is replaced by $H_{A}:=A^T HA/(\det A)$. Therefore choosing $A=H^{-\frac 1 2} R_2^T$
  with $R_2\in SO(2)$ and such that $R^T_2 H_2 R=:D_2$ is diagonal, one has
  \begin{equation}
  H_{A}= I \chi_1+D_2\chi_2
  \end{equation}
  so that $H_{A}$ is diagonal and thus  $(H_A)_{12}$ is identically zero.
This in turn implies, by \eqref{sigma(G,H)bis} that the corresponding conductivity $$\sigma_{A}:=\frac{G^{-1}+(H_A)_{12} J}{(H_A)_{22}}$$ is symmetric.

  We observe, in passing, that applying the same strategy to the domain of $f$ one can independently reduce a two-phase $G$ to the form
  \begin{equation}
  G_{B}= I \chi_1+G_2\chi_2
  \end{equation}
  with $G_2$ a diagonal matrix by a linear transformation $x\to Bx$.

In the work of Milton, the ``symmetrization''  property for a two-phase composites is obtained as  follows.
Let $\lambda\in[0,1)]$ and let $\sigma\in \msl$. 
Set
\begin{equation}\label{A,S}
A=
\left(
\begin{array}{ll}
a&b\\
c&d
\end{array}
\right),
 \quad \Sigma_A=(a\sigma+bJ)(cI+d J \sigma)^{-1}
\end{equation}
and let $U_{\sigma}=(u_{\sigma}^1,u^2_{\sigma})$ be any solution to the equation \eqref{stream} i.e. $\sigma\nabla u_{\sigma}^1=J^T \nabla u^2_{\sigma}$. 

\begin{proposition}\label{K-T}
For any two-phase composites, there exists $A$  as in \eqref{A,S} such that the corresponding $\Sigma_A$ is symmetric and moreover for some $\lambda^\prime\in [0,1)$ one has $\Sigma_A\in \mslp$.
 \end{proposition}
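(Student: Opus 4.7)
The plan is to interpret $\Sigma_A$ as the conductivity associated with the Beltrami system obtained from the original one by the linear change of variables on the target space described in the paragraph preceding the statement. Under $f\mapsto A^{-1}f$ the matrix field $H$ is transformed into $H_A:=A^T H A/\det A$ while $G$ is unchanged, and by \eqref{sigma(G,H)bis} the transformed conductivity $\sigma_{H_A}=(G^{-1}+(H_A)_{12}J)/(H_A)_{22}$ is symmetric precisely when $(H_A)_{12}\equiv 0$. Since $H$ takes only two values, the task thus reduces to finding a \emph{single} $A\in GL(2)$ that simultaneously diagonalizes $A^T H_1 A$ and $A^T H_2 A$.

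To construct such an $A$, I would exploit that $H_1$ is symmetric and positive definite. Set $B:=H_1^{-1/2}$, so that $B^T H_1 B=I$ and $B^T H_2 B=H_1^{-1/2}H_2 H_1^{-1/2}$ is still symmetric positive definite. Picking $R\in O(2)$ that diagonalizes this last matrix, $R^T(H_1^{-1/2}H_2 H_1^{-1/2})R=:D_2$, the choice $A:=H_1^{-1/2}R$ makes both $A^T H_1 A=I$ and $A^T H_2 A=D_2$ diagonal. Consequently $(H_A)_{12}\equiv 0$ in $\Om$, and \eqref{sigma(G,H)bis} yields $\Sigma_A=G^{-1}/(H_A)_{22}$, which is symmetric.

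The next step is to check that this $A$ corresponds, after a purely algebraic identification, to a matrix of the Moebius form \eqref{A,S}. Writing the transformed pair of potentials as $\tilde u=au+bv$, $\tilde v=cu+dv$, where $v$ is a stream function of $u$ satisfying $\nabla v=J\sigma\nabla u$, and then imposing $\Sigma_A\nabla\tilde u=J^T\nabla\tilde v$ on these new potentials, one obtains the fractional-linear expression $\Sigma_A=(a\sigma+bJ)(cI+dJ\sigma)^{-1}$, up to a relabeling of the entries of $A$. I would verify this identity by direct substitution but not dwell on the bookkeeping, as the four entries of $A$ give enough freedom to reproduce any linear transformation of $(u,v)$.

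Finally, to establish $\Sigma_A\in\mslp$ for some $\lambda'\in[0,1)$, I would use that under $\sigma\in\msl$ the two values of $G$ are symmetric positive definite elements of $SL(2)$ whose eigenvalues are uniformly bounded above and bounded away from zero, and that the scalar $(H_A)_{22}$ is likewise bounded from above and bounded away from zero. Hence the eigenvalues of the symmetric matrix $\Sigma_A=G^{-1}/(H_A)_{22}$ are uniformly controlled from above and below, which together with its symmetry yields \eqref{ellipticity-sigma1}--\eqref{ellipticity-sigma2} for a suitable $\lambda'>0$. The main obstacle is the second step, namely matching the Beltrami-system transformation with the Moebius form in \eqref{A,S}; the simultaneous diagonalization itself is straightforward in two dimensions because after reducing one phase to the identity only a further single rotation is needed.
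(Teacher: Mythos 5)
Your proof follows essentially the same route as the paper: diagonalize $H_1$ and $H_2$ simultaneously by a linear change of variables in the target space, so that $(H_A)_{12}\equiv 0$ and hence, by \eqref{sigma(G,H)bis}, the transformed conductivity $\Sigma_A=G^{-1}/(H_A)_{22}$ is symmetric; the paper's $A=H^{-1/2}R_2^T$ (with $H$ meaning $H_1$) is your $A=H_1^{-1/2}R$. Two remarks. First, what you dismiss as bookkeeping in your third step is not a side issue: it is exactly the content of Proposition \ref{prop2}, where the Moebius matrix $A=\left(\begin{smallmatrix}a&b\\c&d\end{smallmatrix}\right)$ of \eqref{A,S} is shown to correspond to the potential transformation $U_{\Sigma_A}=A'U_\sigma$ with $A'=\left(\begin{smallmatrix}c&d\\-b&a\end{smallmatrix}\right)$; without that identity you have produced a symmetrizing change of variables on the Beltrami side but have not yet exhibited the matrix $A$ in the fractional-linear form that Proposition \ref{K-T} actually requires. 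Second, you should take $R\in SO(2)$ rather than $O(2)$: a reflection gives $\det A<0$, and then $H_A=A^{T}HA/\det A$ becomes negative definite, which breaks ellipticity---this is precisely the sign condition $\det A'>0$ that the paper isolates at the end of the section by computing $(\Sigma_A)^S$ and $(\Sigma_A^{-1})^S$ explicitly. Since any symmetric positive definite $2\times 2$ matrix can be diagonalized by a rotation, the fix costs nothing, and once the sign is fixed your eigenvalue argument for $\Sigma_A\in\mslp$ is sound, though less explicit than the paper's direct computation.
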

 
 To continue the argument Milton needs to prove that the $G$-closure problem relative to $\Sigma_A$ is mapped one to one into that relative to $\sigma$. He uses \cite{M-hall} the commutation of the linear fractional transformation $\sigma\to \Sigma_A$ with homogenization, see also \cite{T}, Lemma  20.3.
 
 Our perspective is to use  the following property.

\begin{proposition}\label{prop2}
For any given $A$ as in \eqref{A,S} for which $\Sigma_A\in \mslp$ for some $\lambda^\prime\in [0,1)$,
 there exists 
\begin{equation}
A^\prime=
\left(
\begin{array}{ll}
a^\prime&b^\prime\\
c^\prime&d^\prime
\end{array}
\right),
\end{equation}
such that any solution $U_{\Sigma_A}=(u_{\Sigma_A}^1,u^2_{\Sigma_A})$ to
 $\Sigma_{A}\nabla u_{\Sigma_A}^1=J^T \nabla u^2_{\Sigma_A}$ takes the form
\begin{equation}
U_{\Sigma_A}= A^\prime U_{\sigma}.
 \end{equation}
 \end{proposition}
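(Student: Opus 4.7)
The strategy is to guess the explicit form of $A'$ from a direct calculation, verify that it sends $\sigma$-solutions to $\Sigma_A$-solutions, and then use invertibility of $A'$ (which will come from the assumption $\Sigma_A \in \mslp$) to conclude that every $\Sigma_A$-solution arises this way.

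Concretely, given $A=\bigl(\begin{smallmatrix} a & b\\ c & d\end{smallmatrix}\bigr)$, I would take
\[
A' := \begin{pmatrix} c & d \\ -b & a \end{pmatrix}.
\]
To motivate this choice, let $U_\sigma=(u_\sigma^1,u_\sigma^2)$ satisfy $\sigma\nabla u_\sigma^1 = J^T\nabla u_\sigma^2$, equivalently $\nabla u_\sigma^2 = J\sigma\nabla u_\sigma^1$, and set $V=A'U_\sigma$, i.e.\ $v^1 = c u_\sigma^1 + d u_\sigma^2$ and $v^2 = -b u_\sigma^1 + a u_\sigma^2$. Then
\[
\nabla v^1 = (cI+dJ\sigma)\nabla u_\sigma^1 \,, \qquad \nabla v^2 = (aJ\sigma - bI)\nabla u_\sigma^1 \,.
\]
Using $J^T J = I$ and $J^2=-I$, one checks the algebraic identity $J^T(aJ\sigma - bI) = a\sigma + bJ$, so
\[
J^T\nabla v^2 = (a\sigma + bJ)\nabla u_\sigma^1 = (a\sigma+bJ)(cI+dJ\sigma)^{-1}\nabla v^1 = \Sigma_A \nabla v^1 \,.
\]
Hence $V$ satisfies the $\Sigma_A$-stream equation, so the map $U_\sigma\mapsto A'U_\sigma$ sends $\sigma$-admissible pairs to $\Sigma_A$-admissible pairs.

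Next I would verify the invertibility of $A'$. One has $\det A' = ca + bd$, so it remains to rule out $ac+bd=0$. If this vanished and, say, $(a,b)\ne 0$, then $(c,d)=\tau(-b,a)$ for some $\tau\in\R$, which forces
\[
cI+dJ\sigma = \tau(aJ\sigma - bI) = \tau J(a\sigma + bJ) \,,
\]
and hence $\Sigma_A = (a\sigma+bJ)(cI+dJ\sigma)^{-1} = -\tau^{-1} J$. But $J$ is antisymmetric with $J\xi\cdot\xi\equiv 0$, contradicting $\Sigma_A\in\mslp$ with $\lambda'>0$. The degenerate cases $(a,b)=0$ or $(c,d)=0$ are handled similarly (either $\Sigma_A$ would collapse to a scalar multiple of $J$ or would fail to be defined), again contradicting the ellipticity of $\Sigma_A$. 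So $A'$ is invertible.

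Finally, given an arbitrary solution $U_{\Sigma_A}$ of $\Sigma_A\nabla v^1 = J^T\nabla v^2$, define $U_\sigma := (A')^{-1} U_{\Sigma_A}$. Running the calculation above in reverse (equivalently, applying the same computation with $A$ and $\sigma$ replaced by the matrix and the conductivity associated with the inverse M\"obius transformation) shows $U_\sigma$ satisfies $\sigma\nabla u_\sigma^1 = J^T\nabla u_\sigma^2$, giving the desired $U_{\Sigma_A}=A'U_\sigma$. The only subtle point is the invertibility step; once that is established, everything reduces to the algebraic identity $J^T(aJ\sigma-bI)=a\sigma+bJ$, which in turn is the ``infinitesimal'' version of the commutation of linear-fractional transformations with the Beltrami structure used throughout this section.
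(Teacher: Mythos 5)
Your proof is correct and identifies the same matrix $A'=\bigl(\begin{smallmatrix}c&d\\-b&a\end{smallmatrix}\bigr)$ via the same algebraic identity as the paper; the paper derives $A'$ by requiring the bracketed matrix $a'\Sigma_A-c'J^T+(b'\Sigma_A-d'J^T)J\sigma$ to vanish, whereas you guess $A'$ and substitute, which is the same computation read in the opposite order. You additionally verify $\det A'=ac+bd\neq 0$ (so that the correspondence is onto the $\Sigma_A$-solutions, as the word ``any'' in the statement requires), a point the paper's proof leaves implicit and only settles afterward through the equivalence $\Sigma_A\in\mslp$ for some $\lambda'>0$ $\iff\det A'>0$.
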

 
  \begin{proof}
We need to prove that there exist $\{a^\prime, b^\prime,c^\prime,d^\prime\}$ such that
$$
\Sigma_A (a^\prime \nabla u^1_{\sigma}+b^\prime \nabla u^2_{\sigma})=J^T(c^\prime \nabla u^1_{\sigma}+d^\prime \nabla u^2_{\sigma}),
$$
which is equivalent to show that
$$
 (a^\prime \Sigma_A -c^\prime J^T)\nabla u^1_{\sigma}+
 (b^\prime \Sigma_A -d^\prime J^T)\nabla u^2_{\sigma}=0.
 $$
 We now use the equation  $ \sigma \nabla u^1_{\sigma}=J^T \nabla u^2_{\sigma}$ and write the previous equation as
$$
 [a^\prime \Sigma_A -c^\prime J^T+
 (b^\prime \Sigma_A -d^\prime J^T)J \sigma ]\nabla u^1_{\sigma}=0.
 $$
 One possible solution (actually the only one) is found if the matrix in square brackets is zero i.e. if and only if
 $$
 a^\prime \Sigma_A -c^\prime J^T+
 (b^\prime \Sigma_A -d^\prime J^T)J \sigma =0 \Leftrightarrow
 \Sigma_A(a^\prime I +b^\prime J \sigma)=c^\prime J^T+d^\prime \sigma \Leftrightarrow
 $$
 $$
   \Sigma_A=(c^\prime J^T+d^\prime \sigma)(a^\prime I +b^\prime J \sigma)^{-1}
 $$
 and the latter is equivalent to make the following choice:
\begin{equation}\label{Ap}
A^\prime=
\left(
\begin{array}{cc}
c&d\\
-b&a 
\end{array}
\right).
\end{equation}
  \end{proof}
Proposition  \ref{prop2} is the key property to the commuting rule and it is, indeed, a linear change of variables in the target space of the underlying quasiregular mapping $U=(u,v)$, solution to \eqref{stream}.

Finally one may wonder whether \eqref{Ap} can be chosen in such a way to have $\Sigma_A \in \mslp$ for some $\lambda^\prime>0$. To check this we first note that
\begin{multline*}
 \Sigma_A=(a \sigma +b J)(cI +d J \sigma)^{-1}=\frac{a \sigma +b J}{c^2\det \sigma +d^2}{\rm Adj}(cI +d J \sigma)=
 \\
 \frac{a \sigma +b J}{c^2\det \sigma +d^2} (cI +d  J^T(\sigma^T J^T)J)=
\frac{a \sigma +b J}{c^2\det \sigma +d^2} (cI +d  J^T\sigma^T )=\\
\frac{a c \sigma +b c  J+ a d \sigma J^T\sigma^T+bd\sigma^T}{c^2\det \sigma +d^2}= \frac{a c \sigma +bd\sigma^T+ b c  J+ a d \det \sigma J^T}{c^2\det \sigma +d^2}.
\end{multline*}
It follows that
\begin{equation}
(\Sigma_A)^S=\frac{\Sigma_A+\Sigma_A^T}{2}=
\frac{a c+bd}{c^2\det \sigma +d^2}\sigma^S. 
\end{equation}
Therefore, recalling  \eqref{Ap}, the first necessary condition to \eqref{ellipticity-sigma1} can be expressed as follows
\begin{equation}\label{nec1}
c^2+d^2>0\quad,\quad ac+bd>0\Leftrightarrow ac+bd>0 \Leftrightarrow \det A^{\prime}>0.
\end{equation}
Now we need to consider $\Sigma_A^{-1}$.
\begin{multline*}
\Sigma_A^{-1}=(cI +d J \sigma) (a \sigma +b J)^{-1}=\frac{cI +d J \sigma}{a^2\det \sigma +b^2}{\rm Adj}(a \sigma +b J)=
\\
\frac{cI +d J \sigma}{a^2\det \sigma +b^2}(a J \sigma^T J^T+b J^T)=
\frac{(cI +d J \sigma)(a J \sigma^T J^T+b J^T)}{a^2\det \sigma +b^2}=
\\
\frac{a c J \sigma^T J^T + ad J \sigma J \sigma^T J^T+ b c  J^T +b d J \sigma J^T}{a^2\det \sigma +b^2}=
\\
\frac{a c J \sigma^T J^T +b d J \sigma J^T + ad J^T \det \sigma+ b c  J^T}{a^2\det \sigma +b^2}.
\end{multline*}
It follows that
\begin{equation}
(\Sigma_A^{-1})^S= \frac{a c+bd}{a^2\det \sigma +b^2}J \sigma^SJ^T.
\end{equation}
Therefore the second necessary condition to \eqref{ellipticity-sigma2} is expressed as follows 
\begin{equation}\label{nec2}
a^2+b^2>0\,,\quad ac+bd>0\Longleftrightarrow ac+bd>0
\Longleftrightarrow \det A^{\prime}>0.
\end{equation}
Putting \eqref{nec1} and \eqref{nec2} together we obtain 
\begin{equation}
\Sigma_A \in \mslp \quad \hbox{for some}\quad \lambda^\prime>0 \Longleftrightarrow
\det A^{\prime}>0 \,.
\end{equation}
Again, this fact has a clear interpretation in the language of the Beltrami system, recalling that $A'$ represents a linear change of variables in the target space  and that
ellipticity in this context is measured according to  Proposition \ref{prodotto-autovalori}.

\section*{Acknowledgements}
We thank Graeme Milton for an insightful discussion about this problem.

\end{document}